\newcounter{cprop}[section]
\newtheorem{theorem}[cprop]{Theorem}
\theoremstyle{plain}
\newtheorem{corollary}[cprop]{Corollary}
\newtheorem{lemma}[cprop]{Lemma}
\newtheorem{proposition}[cprop]{Proposition}
\newtheorem{assumption}[cprop]{Assumption}
\numberwithin{equation}{section}
\theoremstyle{definition}
\newtheorem{definition}[cprop]{Definition}
\theoremstyle{remark}
\newtheorem{remark}[cprop]{Remark}
\renewcommand{\P}{\mathbb{P}}
\newcommand{\R}{\mathbb{R}}
\newcommand{\N}{\mathbb{N}}
\newcommand{\vertiii}[1]{{\left\vert\kern-0.25ex\left\vert\kern-0.25ex\left\vert #1 
		\right\vert\kern-0.25ex\right\vert\kern-0.25ex\right\vert}}
\begin{document}
	\title[Center manifolds]{A general center manifold theorem on fields of Banach spaces}
	
	\author{M. Ghani Varzaneh}
	\address{Mazyar Ghani Varzaneh\\
		Fakult\"at f\"ur  Mathematik und Informatik, FernUniversit\"at in Hagen, Hagen, Germany}
	\email{mazyar.ghanivarzaneh@fernuni-hagen.de}

	\author{S. Riedel}
	\address{Sebastian Riedel \\
		Fakult\"at f\"ur  Mathematik und Informatik, FernUniversit\"at in Hagen, Hagen, Germany}
	\email{sebastian.riedel@fernuni-hagen.de }

	\keywords{center manifolds, Oseledets splitting, fields of Banach spaces, random dynamical systems}
	
	\subjclass[2010]{37H15, 37L55, 37B55, 37Lxx}
	\begin{abstract}
		A general local center manifold theorem around stationary trajectories is proved for nonlinear cocycles acting on measurable fields of Banach spaces. Applications to several stochastic (partial) differential equations are given. 
	\end{abstract}
	
	\maketitle
	
	\section*{Introduction}
	
	The center manifold is an object that helps to describe the evolution of a dynamical system. A center manifold is invariant under the flow, and it does not either grow or decay exponentially around a stationary point. The theory of center manifolds is an essential tool for analyzing the stability of systems and, more importantly, in bifurcation theory. Indeed, center manifolds and the idea of normal forms are two standard approaches for simplifying dynamical systems that can be used to reduce the dimension and to eliminate nonlinearities of the system \cite{Car81}. \smallskip
	
	In bifurcation theory, the main purpose is to study the changes in the qualitative or topological structure of the family of flows. The systematic procedure is to compute or approximate the center manifold and then to make a reduction into the center manifold by applying some near identity transformation to simplify the system \cite{Wig03, GH83}. Due to the importance of this theory in applications, there are a lot of results devoted to the existence of center manifolds for a variety of equations and dynamics in both finite and infinite dimensions \cite{VF13, MR18,HV07,GW13}. \smallskip
	
	Center manifold theory can also be expressed for random dynamical systems (RDS), i.e. systems that are also affected by a random signal. Typical examples that induce random dynamical systems are stochastic ordinary differential equations (SDEs) or stochastic partial differential equations (SPDEs) \cite{Arn98}. In the case of SDEs, center manifolds were studied e.g. in \cite{AB89,Box89,Box91,Rob08,NK21}. For SPDEs, the flow takes values in an infinite dimensional space. In this case, random center manifolds are considered e.g. in \cite{DW14, CRD15,CRD19,KN23}. \smallskip
	
	The main goal of this work is to provide general conditions under which a given random dynamical system admits a local center manifold around a stationary point. Our main result, cf. Theorem \ref{LCMT}, is formulated in a generality that allows to apply it to SDEs, SPDEs, and even more general equations provided that they generate a random dynamical system. In fact, in the literature, most of the works that deal with the existence of random invariant manifolds for S(P)DEs face two major challenges:
	\begin{enumerate}
		\item Prove that a given equation generates an RDS.
		\item Prove that this particular RDS admits a center manifold.
	\end{enumerate}
	While the methods to prove point 1 often differ significantly from each other (for example, one can use perfection theorems for crude cocycles \cite{Arn98}, find random mappings that transform the equation to a pathwise solvable random ODE / PDE \cite{CRD15} or apply a pathwise stochastic calculus like rough paths theory \cite{NK21,KN23}), the idea to prove point 2 are often similar. Nonetheless, from a technical point of view, dealing with random dynamical systems in Banach spaces is often very challenging. In fact, the main difficulty arises from the lack of an inner product, which makes it impractical to use the exterior algebra. This, in turn, complicates the interpretation of the volume growth of the Lyapunov exponents for the cocycle defined in a general Banach space. Therefore, providing an abstract theorem that can be applied in great generality will be very useful.\smallskip
	
	Let us discuss a few features of our main abstract result which is formulated in Theorem \ref{LCMT} and how it is related to other results in the literature.
	\begin{itemize}
		\item Since our goal is to apply our main result to SPDEs, it will be necessary that Theorem \ref{LCMT} is formulated for RDS defined on infinite dimensional spaces. There are some works that prove center manifold theorems on RDS that are defined on Hilbert spaces, cf. \cite{CRD15, CRD19}. However, more modern pathwise solution theories for SPDEs (like rough paths theory \cite{GH19,GHT21}) define solutions that take values in a Banach, not only in a Hilbert space. In fact, Theorem \ref{LCMT} can be applied to cocycles defined on separable Banach spaces. With this generality, we can cover, for instance, the results obtained in \cite{KN23}. We provide two general examples of this type of equations in Section \ref{A_3} and Section \ref{A_4}. Furthermore, we discuss an example of a rough different equation in Section \ref{A_2} to which our finding can be applied. In particular, we obtain the results of \cite{NK21} in a more general setting.
		
		\item Some stochastic differential equations induce RDS that are defined in random spaces. This can be the case, for instance, when considering the linearization of an SDE evolving on a manifold \cite{Arn98}. Other examples are given by singular stochastic delay differential equations \cite{GVRS22, GVR21}. In this case, the random spaces have a fiber-type structure. In the infinite dimensional case, one cannot expect that the random spaces are isomorphic, and the correct structures to consider here are \emph{measurable fields of Banach spaces}. Therefore, we formulated Theorem \ref{LCMT} for RDS acting on a measurable field. To our knowledge, this is the first time that a center manifold theorem was formulated for an RDS acting on a field of Banach spaces. However, we want to point out that every fixed Banach space forms a (trivial) field of Banach spaces, thus our theorem covers RDS acting on separable Banach spaces as a special case. A concrete example of a stochastic functional differential equation is discussed in Section \eqref{A_1}. Our results for these equations can even be applied in a more general setting, i.e., when the equation is perturbed by a fractional Brownian motion. However, we focused on the Brownian case to keep our presentation simple.
		
		\item Many works dealing with center manifolds concentrate on the case where the equation has a deterministic fixed point (see e.g \cite{CRD15, NK21, KN23}). However, there are many equations that admit \emph{random} fixed points (or stationary points) only. This holds, for instance, in case of additive noise (an easy example would be the Langevin equation). Therefore, Theorem \ref{LCMT} is formulated for random fixed points. One cornerstone of our result is the Multiplicative Ergodic Theorem (MET) that ensures the existence of Lyapunov exponents for a given compact linear cocycle acting on a field of Banach spaces. This theorem was proved in \cite{GVRS22, GVR23}.
	\end{itemize}
	The article is structured as follows. In Section \ref{sec_inf_mfd}, we introduce our setting and give some background. In Section \ref{main}, after proving some auxiliary Lemmas, we prove our main result, cf. Theorem \ref{LCMT}. Finally, in Section \ref{app}, we give several examples to illustrate how our result can be applied to a large family of random(stochastic) equations.
	\section{Notations and background}\label{sec_inf_mfd}
	In this section, we collect some notations and present background about the \emph{Multiplicative Ergodic Theorem} (MET). Let us first fix the notations.
	
	\begin{itemize}
		\item For a Banach space $(X, \|\cdot\|_X)$, we will usually drop the subindex $\|\cdot\|_X$ and use the symbol $\| \cdot \|$ rather. If  $\psi \colon X \to Y$ is a linear map between Banach spaces, $\| \psi \|$ denotes the  operator norm, i.e.
		\begin{align*}
			\| \psi \| = \sup_{x \in X \setminus \{0\}} \frac{\| \psi(x) \|}{\|x\|}.
		\end{align*}
		\item Let $(X, \|\cdot\|)$ be a Banach space and $Y\subset X$ a closed linear subspace of $X$. Assume $x\in X$. Then by $d(x,Y)$, we denote the usual distance function, i.e.
		\begin{align*}
			d(x,Y):=\inf_{y\in Y}\Vert x-y\Vert.
		\end{align*}
		\item  Let $X$ be a Banach space and $E,F$ two closed subspaces of $X$ such that $E\cap F = \lbrace 0\rbrace$. The linear map $\Pi_{E||F}:=E\oplus F\rightarrow E $ defined by  $(e,f)\rightarrow e$ is called \emph{projection}. In this case, the operator norm is given by
		\begin{align*}
			\| \Pi_{E||F} \| = \sup_{e \in E \setminus\{0\}, f \in F \setminus \{0\}} \frac{\|e\|}{\|e + f\|}.
		\end{align*}
		\item Let \((\Omega, \mathcal{F})\) be a measurable space and \(\{E_\omega\}_{\omega \in \Omega}\) be a family of Banach spaces. Recall that \(\prod_{\omega \in \Omega} E_\omega\) denotes the set of functions \(f: \Omega \to \bigcup_{\omega \in \Omega} E_\omega\) such that \(f(\omega) \in E_\omega\) for every \(\omega \in \Omega\). These functions are also called \emph{sections}. We call  \(\{E_\omega\}_{\omega \in \Omega}\) a \emph{measurable field of Banach spaces} if there exists a set of sections
			\[
			\Delta \subset \prod_{\omega \in \Omega} E_{\omega}
			\]
		with the following properties:
		\begin{itemize}
			\item[(i)] $\Delta$ is a linear subspace of $\prod_{\omega \in \Omega} E_{\omega}$.
			\item[(ii)] There is a countable subset $\Delta_0 \subset \Delta$ such that for every $\omega \in \Omega$, the set $\{g(\omega)\, :\, g \in \Delta_0\}$ is dense in $E_{\omega}$.
			\item[(iii)] For every $g \in \Delta$, the map $\omega \mapsto \| g(\omega) \|_{E_{\omega}}$ is measurable.
		\end{itemize}
		We will most of the time omit the subindex $E_{\omega}$ and just write $\| \cdot \|$ instead of $\| \cdot \|_{E_{\omega}}$ when it is clear from the context which norm is meant.
		\item Let $(\Omega,\mathcal{F})$ be a measurable space. Assume  $\theta \colon \Omega \to \Omega$, $\omega \mapsto \theta \omega$ is a measurable map with a measurable inverse $\theta^{-1}$. In this case, we call $(\Omega,\mathcal{F}, \theta)$ a \emph{measurable dynamical system}. We will use $\theta^n \omega$ for $n$-times applying $\theta$ to an element $\omega \in \Omega$. We also set $\theta^0 := \operatorname{Id}_{\Omega}$ and $\theta^{-n} := (\theta^n)^{-1}$. If $\P$ is a probability measure on $(\Omega,\mathcal{F})$ that is invariant under $\theta$, i.e. $\P(\theta^{-1} A) = \P(A) = \P( \theta A)$ for every $A \in \mathcal{F}$, we call the tuple $\big(\Omega, \mathcal{F},\P,\theta\big)$ a \emph{measure-preserving dynamical system}. This system is named \emph{ergodic} if every invariant measurable set under $\theta$ has the probability $0$ or $1$.
		
		\item Assume $(\Omega,\mathcal{F},\P,\theta)$ is a measure-preserving dynamical system and $(\{E_{\omega}\}_{\omega \in \Omega},\Delta)$ a measurable field of Banach spaces. A \emph{continuous cocycle on $\{E_{\omega}\}_{\omega \in \Omega}$} consists of a family of continuous maps
		\begin{align}\label{eqn:def_cocycle}
			\varphi_{\omega} \colon E_{\omega} \to E_{\theta \omega}.
		\end{align}
		If $\varphi$ is a continuous cocycle, we set $\varphi^n_{\omega} \colon E_{\omega} \to E_{\theta^n \omega}$ where 
		\begin{align*}
			\varphi^n_{\omega} := \varphi_{\theta^{n-1}\omega} \circ \cdots \circ \varphi_{\omega}.
		\end{align*}
		We also define $\varphi^0_{\omega} := \operatorname{Id}_{E_{\omega}}$. If the maps
		\begin{align*}
			\omega \mapsto \| \varphi^n_{\omega}(g(\omega)) \|_{E_{\theta^n \omega}}, \quad n \in \N
		\end{align*}
		are measurable for every $g \in \Delta$, we say that \emph{$\varphi$ acts on $\{E_{\omega}\}_{\omega \in \Omega}$}. In this case, we will speak of a \emph{continuous random dynamical system on a field of Banach spaces}. If the map \eqref{eqn:def_cocycle} is bounded linear/compact, we call $\varphi$ a bounded linear/compact cocycle.
	\end{itemize}
	
	Next, we define stationary points for cocycles acting on measurable fields.
	\begin{definition}\label{stationary}
		Let $ \lbrace E_{\omega}\rbrace_{\omega\in \Omega} $ be a measurable field of Banach spaces and $ \varphi^{n}_{\omega} $ a nonlinear cocycle acting on it. A map $Y:\Omega\longrightarrow \prod_{\omega\in\Omega}E_{\omega}$ is called \emph{stationary} for $ \varphi^{n}_{\omega} $  provided
		\begin{itemize}
			\item[(i)]$Y_{\omega}\in E_{\omega}$,
			\item[(ii)]$\varphi^n_\omega(Y_{\omega})=Y_{\theta^{n}\omega}$ and
			\item[(iii)]$\omega\rightarrow\Vert Y_{\omega}\Vert $ is measurable.
		\end{itemize}
	\end{definition}
	Note that a stationary point for a random dynamical system can be regarded as a natural generalization of a fixed point in (deterministic) dynamical systems. If $ \varphi^{n}_{\omega}$ is Fr\'echet differentiable, one can easily check that the derivative around a stationary solution also enjoys the cocycle property, i.e for $ \psi^n_\omega(.) = D_{Y_{\omega}}\varphi^{n}_{\omega}(.) $, one has
	\begin{align*}
		\psi^{n+m}_{\omega}(.)=\psi^{n}_{\theta^{m}\omega}\big{(}\psi^m_\omega(.)\big{)}.
	\end{align*} 
	
	The key ingredient to prove the center manifold theorem is the following version of a \emph{Multiplicative Ergodic Theorem} which we call the semi-invertible Oseledets theorem on fields of Banach spaces, for a proof, cf. \cite[Theorem 4.17]{GVRS22} and \cite[Theorem 1.21]{GVR23}.
	
	\begin{theorem}\label{thm:MET_Banach_fields}
		Let $(\Omega,\mathcal{F},\mathbb{P},\theta)$ be an ergodic measure-preserving dynamical system. Assume that $\psi$ is a compact linear cocycle acting on a measurable field of Banach spaces $\{E_{\omega}\}_{\omega \in \Omega}$. For $\mu \in \R \cup \{-\infty\}$ and $\omega \in {\Omega}$, set
		\begin{align*}
			F_{\mu}(\omega) := \big{\lbrace} x\in E_{\omega}\, :\, \limsup_{n\rightarrow\infty} \frac{1}{n} \log \Vert\psi^n_{\omega}(x) \Vert \leq \mu \big{\rbrace}. 
		\end{align*} 
		Assume that
		\begin{align*}
			\log^+ \Vert\psi_{\omega} \Vert \in L^{1}(\Omega)
		\end{align*}
		and that
		\begin{align*}
			\omega \mapsto \| \psi^k_{\theta^n \omega}(\psi^n_{\omega}(g(\omega)) - \tilde{g}(\theta^n \omega)) \|_{E_{\theta^{n+k} \omega}}
		\end{align*}
		is measurable for every $g, \tilde{g} \in \Delta$ and $n,k \geq 0$. 
		
		Then there is a measurable $\theta$-invariant set $\tilde{\Omega} \subset \Omega$ of full measure and a decreasing sequence $\{\mu_i\}_{i \geq 1}$, $\mu_i \in [-\infty, \infty)$ (Lyapunov exponents) with the properties that $\lim_{n \to \infty} \mu_n = - \infty$ and either $\mu_i > \mu_{i+1}$ or $\mu_i = \mu_{i+1} = -\infty$ such that for every $\omega \in \tilde{\Omega}$, $F_{\mu_1}(\omega) = E_{\omega}$ and
		\begin{align}
			x\in F_{\mu_{i}}(\omega)\setminus F_{\mu_{i+1}}(\omega) \quad \text{if and only if}\quad \lim_{n\rightarrow\infty}\frac{1}{n}\log \Vert\psi^n_{\omega}(x) \| = \mu_{i}.
		\end{align}
		Moreover, there are numbers $m_1, m_2, \ldots$ such that $\operatorname{codim} F_{\mu_j}(\omega) = m_1 + \ldots + m_{j-1}$ for every $\omega \in \tilde{\Omega}$. Furthermore, for every $i \geq 1$ with $\mu_i > \mu_{i+1}$ and $\omega \in \tilde{\Omega}$, there is an $m_i$-dimensional subspace $H^i_\omega$ with the following properties:
		\begin{itemize}
			\item[(i)] (Invariance)\ \ $\psi_{\omega}^k(H^i_{\omega}) = H^i_{\theta^k \omega}$ for every $k \geq 0$.
			\item[(ii)] (Splitting)\ \ $H_{\omega}^i \oplus F_{\mu_{i+1}}(\omega) = F_{\mu_i}(\omega)$. In particular,
			\begin{align*}
				E_{\omega} = H^1_{\omega} \oplus \cdots \oplus H^i_{\omega} \oplus   F_{\mu_{i+1}}(\omega).
			\end{align*}
			\item[(iii)] ('Fast-growing' subspace)\ \ For each $ h_{\omega}\in H^{i}_{\omega}\setminus \{0\} $,
			\begin{align*}
				\lim_{n\rightarrow\infty}\frac{1}{n}\log\Vert \psi^{n}_{\omega}(h_{\omega})\Vert = \mu_{i}
			\end{align*}
			and
			\begin{align*}
				\lim_{n\rightarrow\infty}\frac{1}{n}\log\Vert (\psi^{n}_{\theta ^{-n}\omega})^{-1}(h_{\omega})\Vert=-\mu_{i}.
			\end{align*}
			\item [(iv)] ('Angle vanishing') Let $\tilde{H}^i_{\omega}$ be a subspace of $H^{i}_{\omega}$ and $h_{\omega}\in H^{i}_{\omega}\setminus \tilde{H}^{i}_{\omega}$. Then
			\begin{align*}
				\lim_{n\rightarrow\infty}\frac{1}{n}\log d(\psi^{n}_{\omega}(h_{\omega}),\psi^{n}_{\omega}(\tilde{H}^{i}_{\omega})) = \mu_{i}
			\end{align*}
			and
			\begin{align*}
				\lim_{n\rightarrow\infty}\frac{1}{n}\log d\big((\psi^{n}_{\theta ^{-n}\omega})^{-1}(h_{\omega}),(\psi^{n}_{\theta ^{-n}\omega})^{-1}(\tilde{H}^{i}_{\omega}) \big) = -\mu_{i}.
			\end{align*}
		\end{itemize} 
	\end{theorem}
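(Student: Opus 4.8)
The plan is to follow the route used for semi-invertible Oseledets theorems in the Banach setting (this is what the cited works do), organised into two halves. First one extracts the Lyapunov exponents and the ``slow'' filtration $\{F_{\mu_i}(\omega)\}$ from subadditive ergodic theory; then one manufactures the finite-dimensional ``fast'' subspaces $H^i_\omega$ by a pull-back argument that trades the non-invertibility of $\psi$ against the invertibility of $\theta$ and the compactness of each $\psi_\omega$. Throughout, the countable dense family $\Delta_0$ is used to reduce every measurability claim about sections and subspaces to countably many scalar statements; the hypothesis that $\omega \mapsto \|\psi^k_{\theta^n\omega}(\psi^n_\omega(g(\omega)) - \tilde{g}(\theta^n\omega))\|$ is measurable for $g,\tilde{g} \in \Delta_0$ is tailored precisely so that the distance functions used below, and ultimately the projections $\Pi_{H^i_\omega \| \cdots}$, come out measurable.

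\emph{Exponents and filtration.} The cocycle identity $\psi^{n+m}_\omega = \psi^n_{\theta^m\omega}\circ\psi^m_\omega$ gives $\log\|\psi^{n+m}_\omega\| \le \log\|\psi^m_\omega\| + \log\|\psi^n_{\theta^m\omega}\|$, so $\log^+\|\psi_\cdot\| \in L^1$ lets Kingman's subadditive ergodic theorem apply and produces, by ergodicity, a deterministic top exponent $\mu_1 = \lim_n \tfrac1n\log\|\psi^n_\omega\|$; since $\|\psi^n_\omega x\| \le \|\psi^n_\omega\|\,\|x\|$ this already yields $F_{\mu_1}(\omega) = E_\omega$. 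Replacing $\|\cdot\|$ by a purely metric volume-growth functional $V_k(\cdot)$ on $k$-dimensional subspaces (the Banach substitute for $\|\wedge^k\cdot\|$, still submultiplicative along the cocycle, and for which compactness forces $\tfrac1n\log V_k(\psi^n_\omega)$ to decrease to $-\infty$ as $k\to\infty$), Kingman gives partial sums $\Lambda_k = \lim_n \tfrac1n\log V_k(\psi^n_\omega)$; their increments are the exponents counted with multiplicity, and reading off the distinct values produces $\{\mu_i\}$ (decreasing, with $\mu_i>\mu_{i+1}$ or $\mu_i=\mu_{i+1}=-\infty$, $\mu_n\to-\infty$) and the multiplicities $m_i$, hence $\operatorname{codim} F_{\mu_j}(\omega) = m_1 + \cdots + m_{j-1}$. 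A fibrewise comparison of $\psi^n_\omega$ with these singular-value data then identifies $F_{\mu_i}(\omega)\setminus F_{\mu_{i+1}}(\omega)$ as exactly the vectors with exact growth rate $\mu_i$.

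\emph{Fast subspaces and their properties.} Fix $i$ with $\mu_i>\mu_{i+1}$ and set $d_i := m_1 + \cdots + m_i = \operatorname{codim} F_{\mu_{i+1}}(\omega)$. For each $n$ choose a $d_i$-dimensional subspace $U^{(i)}_{\omega,n}\subset \psi^n_{\theta^{-n}\omega}(E_{\theta^{-n}\omega})$ on which that map is ``most expanding''; the spectral gap from the previous step makes this subspace essentially unique, and a Cauchy estimate in the Kato gap metric — driven by $\mu_i-\mu_{i+1}>0$ and by compactness — shows $U^{(i)}_{\omega,n}\to U^{(i)}_\omega$ as $n\to\infty$, with $U^{(i)}_\omega$ measurable, $U^{(i)}_\omega \oplus F_{\mu_{i+1}}(\omega) = E_\omega$, and $\psi^k_\omega(U^{(i)}_\omega) = U^{(i)}_{\theta^k\omega}$ for $k\ge0$. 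Putting $H^i_\omega := U^{(i)}_\omega \cap F_{\mu_i}(\omega)$ yields an $m_i$-dimensional invariant subspace with $H^i_\omega \oplus F_{\mu_{i+1}}(\omega) = F_{\mu_i}(\omega)$, hence inductively $E_\omega = H^1_\omega \oplus\cdots\oplus H^i_\omega \oplus F_{\mu_{i+1}}(\omega)$. On $H^i_\omega$ the map $\psi^n_\omega$ is injective with both norm and co-norm of order $e^{(\mu_i\pm\varepsilon)n}$, which gives (iii) in the forward direction; inverting on the finite-dimensional image $\psi^n_{\theta^{-n}\omega}(H^i_{\theta^{-n}\omega}) = H^i_\omega$ gives the backward rate $-\mu_i$. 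Statement (iv) is the same estimate applied to $H^i_\omega$ modulo $\tilde{H}^i_\omega$: since \emph{every} nonzero direction of $H^i_\omega$ has exponent exactly $\mu_i$, neither $d(\psi^n_\omega(h_\omega),\psi^n_\omega(\tilde{H}^i_\omega))$ nor its pull-back analogue can have any exponential rate other than $\pm\mu_i$.

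\emph{Main obstacle.} The crux is the construction of $U^{(i)}_\omega$: with no inner product and no exterior algebra, the ``most expanding $d_i$-dimensional subspace'' must be built from a purely metric notion of singular value, and the Cauchy estimate in the gap metric has to absorb the operator norms of the non-orthogonal projections $\Pi_{E||F}$, which are not a priori uniformly bounded. Showing that the spectral gap nonetheless forces geometric convergence — uniformly enough that $\omega\mapsto H^i_\omega$ is a measurable section of the Grassmann bundle over the field $\{E_\omega\}$ — is the technical heart of the proof, and is exactly where compactness of $\psi$ and the tailored measurability hypothesis are indispensable.
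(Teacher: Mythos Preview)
The paper does not actually prove this theorem: it is quoted as background, with the sentence ``for a proof, cf.\ \cite[Theorem 4.17]{GVRS22} and \cite[Theorem 1.21]{GVR23}'' standing in for the argument, and the subsequent remark only points to \cite[Lemma 1.20]{GVR23} for item (iv). So there is no in-paper proof to compare your proposal against.

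That said, your sketch is a faithful outline of the route those cited references take: Kingman plus a Banach-space volume functional to extract the exponents and the Oseledets filtration, followed by a pull-back construction of the fast spaces using compactness and the invertibility of $\theta$ (the ``semi-invertible'' mechanism), with measurability handled through the countable family $\Delta_0$ and the tailored hypothesis on $\omega \mapsto \|\psi^k_{\theta^n\omega}(\psi^n_\omega(g(\omega)) - \tilde g(\theta^n\omega))\|$. You also correctly identify the genuine technical core --- controlling the gap-metric Cauchy estimate when the projections $\Pi_{E\|F}$ have no uniform bound --- which is exactly where the cited papers do the heavy lifting. As a proof \emph{sketch} this is accurate; as a self-contained proof it is of course far from complete, but since the present paper itself defers entirely to \cite{GVRS22,GVR23}, that is the appropriate level here.
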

	\begin{remark}
		\begin{itemize}
			\item[(i)] The angle vanishing property in item (iv) appears in \cite [Lemma 1.20]{GVR23}.
			\item[(ii)] Note that our cocycle is not necessarily injective. However, for every $i\geq 1$ and $\omega\in \tilde{\Omega}$, due to the invariance property (item (i)), we can define the inverse of the cocycle restricted to any finite direct sum of the spaces $H^{i}_{\omega}$. In addition, as shown in \cite[Theorem 1.21]{GVR23}, the properties listed in Theorem 1.2 regarding the fast-growing subspaces uniquely determine these subspaces. This leads to a canonical way to define the inverse function up to any finite index.
		\end{itemize}
		
	\end{remark}

	\section{Main part}\label{main}
	
	%
	
	%
	We aim to prove a general center manifold theorem for a nonlinear cocycle acting on a measurable field of Banach spaces. For the rest of the paper, we will assume that $\big(\Omega, \mathcal{F},\P,\theta\big)$  is a {measure-preserving dynamical system} such that $\theta$ is ergodic and invertible. We also assume that $ \lbrace E_{\omega}\rbrace_{\omega\in \Omega} $ is a measurable field of Banach spaces and $ \varphi^{n}_{\omega} $ is a nonlinear cocycle acting on it. We will assume
	\begin{assumption}\label{ASSU}
		$ \varphi^{n}_{\omega}$ is  Fr\'echet differentiable and $Y \colon \Omega\longrightarrow \prod_{\omega\in\Omega}E_{\omega}$ is a stationary point for it such that:
		\begin{itemize}
			\item For $ \psi^n_\omega(.) \coloneqq D_{Y_{\omega}}\varphi^{n}_{\omega}(.)$, 
			\begin{align*}
				\log^+ \Vert\psi_{\omega} \Vert \in L^{1}(\Omega)
			\end{align*}
			where $\log^+ x \coloneqq \max \{0,\log x \}$.
			\item We assume that for
			\begin{align} \label{linearization}
				\begin{split}
					P_{\omega} \colon E_{\omega} &\to E_{\theta\omega }\\
					\xi_{\omega} &\mapsto \varphi^{1}_{\omega} (Y_{\omega}+\xi_{\omega})-\varphi^{1}_{\omega}(Y_{\omega})-\psi^{1}_{\omega}(\xi_{\omega}), 
				\end{split}
			\end{align}
			there exists a random variable $R \colon \Omega \to [0,\infty)$ with the property that
			\begin{align*}
				\liminf_{n\rightarrow \infty}\frac{1}{n}\log R(\theta^{n}\omega) \geq 0
			\end{align*}
			almost surely and that for $ \Vert \xi_{\omega}\Vert , \Vert\tilde{\xi}_{\omega}\Vert <R(\omega) $, one has
			\begin{align}\label{eqn:diff_bound_P}
				\Vert P_{\omega} (\xi_{\omega}) - P_{\omega} (\tilde{\xi}_{\omega})\Vert\leq \Vert\xi_{\omega}-\tilde{\xi}_{\omega}\Vert f(\theta\omega)  h{(}\Vert\xi_{\omega}\Vert +\Vert\tilde{\xi}_{\omega}\Vert{)}
			\end{align}
			almost surely where $f \colon \Omega\rightarrow \mathbb{R}^{+} $ is a measurable function with the property that 
			\begin{align*}
				\lim_{n\rightarrow\infty}\frac{1}{n}\log^{+}f(\theta^{n}\omega) = 0  
			\end{align*}
			almost surely. Furthermore, $h$ is assumed to be a nonnegative and increasing function such that  $h(0)=0$ and for some $r>0$, $\limsup_{x\rightarrow 0}\frac{h(x)}{|x|^r}<\infty$.
		\end{itemize}
	\end{assumption}
	Note these assumptions are enabling us to apply Theorem \ref{thm:MET_Banach_fields} to the linearized cocycle around the stationary point $\psi^{n}_{\omega}(.)$. Let us assume $\mu_1>0$ and that we have a zero Lyapunov exponent. Then for $\mu^{-} \coloneqq \max \lbrace \mu_{i} \colon  \mu_{i}<0\rbrace$, we define
	\begin{align}\label{RTR}
		S_{\omega} \coloneqq F_{\mu^{-}}(\omega),\ \ \ U_{\omega} \coloneqq \bigoplus_{i: \mu_{i}>0}H_{\omega}^{i}, \ \ \ \text{and} \ \ C_{\omega} \coloneqq H^{i_{c}}_{\omega},
	\end{align}
	where $\mu_{i_c}=0$. We also set $\mu^{+} \coloneqq \min \lbrace \mu_{i}:  \mu_{i}>0\rbrace$. 
	Note that from \cite[Lemma 1.18]{GVR23}, for every $\omega \in \tilde{\Omega}$:
	\begin{align}\label{pro_invv}
		\begin{split}
			\lim_{j\rightarrow \pm \infty}\frac{1}{n}\log [\Vert \Pi_{ C_{\theta^{j}\omega}\parallel S_{\theta^{j}\omega}\oplus U_{\theta^{j}\omega}}\Vert ]= &
			\lim_{n\rightarrow \pm \infty}\frac{1}{n}\log [\Vert \Pi_{ U_{\theta^{j}\omega}\parallel S_{\theta^{j}\omega}\oplus C_{\theta^{j}\omega}}\Vert ]= \\&
			\lim_{n\rightarrow \pm \infty}\frac{1}{n}\log [\Vert \Pi_{ S_{\theta^{j}\omega}\parallel U_{\theta^{j}\omega}\oplus C_{\theta^{j}\omega}}\Vert ]=0.    
		\end{split}
	\end{align}

	\begin{definition}
		Let $\delta :\mathbb{R}\rightarrow [0,1]$ be a smooth function such that $ \text{supp}(\delta)\subset [-2,2]$, $\delta|_{[-1,1]}=1$ and $\sup_{x\in\mathbb{R}}|\delta^{\prime}(x)|\leq 2$. Also, assume $\rho$ to be a positive random variable. Then we set 
		\begin{align*}
			P_{\omega,\rho}(\xi_{\omega}) \coloneqq \delta(\frac{\Vert\xi_\omega\Vert}{\rho(\theta\omega)})P_{\omega}(\xi_\omega).
		\end{align*}
	\end{definition}
	
	Note that, from \eqref{eqn:diff_bound_P}  and our assumptions,
	\begin{align}\label{LIPs}
		\Vert P_{\omega,\rho}(\xi_{\omega}) - P_{\omega,\rho}(\tilde{\xi}_{\omega})\Vert\leq 5\Vert\xi_{\omega}-\tilde{\xi}_{\omega}\Vert f(\theta\omega)h\big{(}4\rho(\theta\omega)\big{)}.
	\end{align}

	Before proving the main result, we need some auxiliary lemmas.
	\begin{lemma}
		Choose $\tilde{\Omega}$ as in Theorem \ref{thm:MET_Banach_fields}. Let $\omega \in \tilde{\Omega}$ and for $p \leq q$, define ${H}^{p,q}_{\omega} \coloneqq \bigoplus_{p \leq i \leq q} H^{i}_{\omega}$. Assume $\text{dim}({H}^{p,q}_{\omega}) = k$ and let $\{\xi^{t}_{\omega}\}_{1 \leq t \leq k}$ be a basis for ${H}^{p,q}_{\omega}$, i.e. ${H}^{p,q}_{\omega} = \langle \xi^{t}_{\omega} \rangle_{1 \leq t \leq k}$. Then for $m,n\in \mathbb{Z}$, we have
		\begin{align}
			\Vert\psi^{n}_{\theta^m\omega}|_{{H}^{p,q}_{\theta^{m}\omega}}\Vert\leq\sum_{1\leq t\leq k}\frac{\Vert\psi^{m+n}_{\omega}(\xi^{t}_{\omega})\Vert}{d\big{(}\psi_{\omega}^{m}(\xi^{t}_{\omega}), \langle \psi_{\omega}^{m}(\xi^{t^{\prime}}_{\omega}) \rangle_{1\leq t^{\prime}\leq k, t^{\prime}\neq t}\big{)}}.
		\end{align}
		
		\begin{proof}
			Remember for $n<0$, $\psi^{n}_{\omega}=[\psi^{-n}_{\theta^{n}\omega}]^{-1}|_{H_{\omega}^1}$. We need to consider several cases by distinguishing the sign of $m,n, m+n$. Here we take the case $n<0, m+n>0$. Other cases can be proved similarly and are even easier. Assume $\xi=\sum_{1\leq t\leq k}c_t\frac{\psi^{m}_{\omega}(\xi_t)}{\Vert\psi^{m}_{\omega}(\xi_t)\Vert}$. Clearly, we have
			\begin{align}\label{coeficicents_approxiamtion}
				\frac{\Vert\xi\Vert \Vert\psi^{m}_{\omega}(\xi_t)\Vert}{\vert c_t\vert}\geq d\big{(}\psi_{\omega}^{m}(\xi^{t}_{\omega}), \langle \psi_{\omega}^{m}(\xi^{t^{\prime}}_{\omega}) \rangle_{1\leq t^{\prime}\leq k, t^{\prime}\neq t}\big{)}.
			\end{align} 
			For $1\leq t\leq k$, by definition,
			\begin{align}\label{co-}
				\begin{split}
					\psi^{n}_{\theta^{m}}&\big{(}\psi^{m}_{\omega}(\xi_t)\big{)}=[\psi^{-n}_{\theta^{m+n}\omega}]^{-1}\big{(}\psi_{\omega}^{m}(\xi_t)\big{)}\\ &\quad=[\psi^{-n}_{\theta^{m+n}\omega}]^{-1}\big{(}\psi^{-n}_{\theta^{m+n}\omega}(\psi^{m+n}_{\omega}(\xi_t))\big{)}=\psi_{\omega}^{m+n}(\xi_t).
				\end{split}
			\end{align}
			Accordingly from \eqref{coeficicents_approxiamtion} and \eqref{co-}
			\begin{align}\label{Operator _ Estimate}
				\begin{split}
					\Vert\psi_{\theta^m\omega}^{n}(\xi)\Vert&\leq \sum_{1\leq t\leq k} \vert c_t\vert\frac{\Vert\psi_{\omega}^{m+n}(\xi_t)\Vert}{\Vert\psi_{\omega}^{m}(\xi_t)\Vert}\\
					&\leq\sum_{1\leq t\leq k}\frac{\Vert\psi^{m+n}_{\omega}(\xi^{t}_{\omega})\Vert}{d\big{(}\psi_{\omega}^{m}(\xi^{t}_{\omega}), \langle \psi_{\omega}^{m}(\xi^{t^{\prime}}_{\omega}) \rangle_{1\leq t^{\prime}\leq k, t^{\prime}\neq t}\big{)}}\Vert\xi\Vert,
				\end{split}
			\end{align}
			which proves our claim.
		\end{proof}	 
\end{lemma}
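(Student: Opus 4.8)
The plan is to express the operator norm of $\psi^n_{\theta^m\omega}$ restricted to ${H}^{p,q}_{\theta^m\omega}$ in terms of a convenient basis, namely the pushed-forward vectors $\psi^m_\omega(\xi^t_\omega)$, normalized. Since $\psi^m_\omega$ maps ${H}^{p,q}_\omega$ isomorphically onto ${H}^{p,q}_{\theta^m\omega}$ (this is the invariance property (i) in Theorem \ref{thm:MET_Banach_fields}, together with the canonical invertibility noted in the Remark), the family $\{\psi^m_\omega(\xi^t_\omega)/\|\psi^m_\omega(\xi^t_\omega)\|\}_{1\le t\le k}$ is a basis for ${H}^{p,q}_{\theta^m\omega}$. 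So any $\xi$ in that space can be written as $\xi = \sum_t c_t \psi^m_\omega(\xi_t)/\|\psi^m_\omega(\xi_t)\|$, and the first step is to bound the coefficients $|c_t|$ in terms of $\|\xi\|$: projecting onto the $t$-th direction and using that the norm of a projection onto a line along the complementary hyperplane is the reciprocal of the distance from a unit vector in that line to the hyperplane, one gets exactly the estimate \eqref{coeficicents_approxiamtion}, i.e. $|c_t|/\|\psi^m_\omega(\xi_t)\| \le \|\xi\| / d(\psi^m_\omega(\xi_t), \langle \psi^m_\omega(\xi_{t'})\rangle_{t'\ne t})$.

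Next I would apply $\psi^n_{\theta^m\omega}$ to $\xi$ and use linearity: $\psi^n_{\theta^m\omega}(\xi) = \sum_t c_t \psi^n_{\theta^m\omega}(\psi^m_\omega(\xi_t))/\|\psi^m_\omega(\xi_t)\|$. The key algebraic identity is the cocycle/composition rule $\psi^n_{\theta^m\omega}(\psi^m_\omega(\xi_t)) = \psi^{m+n}_\omega(\xi_t)$, which has to be checked with care because of the signs of $m$, $n$, $m+n$ — when an exponent is negative the map is defined as the inverse of a forward map restricted to the fast-growing subspaces, as in \eqref{co-}. For the representative case $n<0$, $m+n>0$ one writes $\psi^n_{\theta^m\omega} = [\psi^{-n}_{\theta^{m+n}\omega}]^{-1}$ on the relevant subspace and observes $\psi^m_\omega(\xi_t) = \psi^{-n}_{\theta^{m+n}\omega}(\psi^{m+n}_\omega(\xi_t))$, so the inverse cancels; the other sign combinations are handled the same way and are easier. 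Then $\|\psi^n_{\theta^m\omega}(\xi)\| \le \sum_t |c_t| \|\psi^{m+n}_\omega(\xi_t)\|/\|\psi^m_\omega(\xi_t)\|$, and substituting the coefficient bound from the previous step yields the claimed inequality, with $\|\xi\|$ factored out; taking the supremum over unit $\xi$ gives the operator norm bound.

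The main obstacle — really the only nontrivial point — is the bookkeeping around negative exponents: one must be sure that all the maps $\psi^m_\omega$, $\psi^n_{\theta^m\omega}$, $\psi^{m+n}_\omega$ are well-defined as (possibly partially-defined) linear isomorphisms on the finite-dimensional fast-growing subspaces, that the composition identity $\psi^n_{\theta^m\omega}\circ \psi^m_\omega = \psi^{m+n}_\omega$ genuinely holds on ${H}^{p,q}_\omega$ for every sign pattern, and that ${H}^{p,q}_\omega$ is actually invariant under all these maps. All of this follows from the invariance property (i) of Theorem \ref{thm:MET_Banach_fields} and the Remark's observation that the inverse cocycle is canonically defined up to any finite index; since the cocycle need not be globally injective, it is essential that we only ever invert on these finite-dimensional invariant pieces. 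Once that is in place, everything else is a one-line triangle-inequality computation, so I would present just the representative case $n<0,\,m+n>0$ in detail and remark that the remaining cases are analogous and simpler.
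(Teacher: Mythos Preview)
Your proposal is correct and follows essentially the same approach as the paper's proof: write an arbitrary $\xi\in H^{p,q}_{\theta^m\omega}$ in the normalized basis $\{\psi^m_\omega(\xi_t)/\|\psi^m_\omega(\xi_t)\|\}$, bound the coefficients via the distance-to-hyperplane estimate, verify the composition identity $\psi^n_{\theta^m\omega}\circ\psi^m_\omega=\psi^{m+n}_\omega$ on $H^{p,q}_\omega$ in the representative case $n<0,\ m+n>0$, and conclude by the triangle inequality. The paper likewise treats only this sign pattern in detail and remarks that the others are similar and easier.
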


Recall the definition of \(S_\omega\), $U_\omega$ and \( C_\omega \) in \eqref{RTR}.

\begin{lemma}\label{lemma:prop_F}
	For $\omega\in \tilde{\Omega}$ and $\epsilon>0$, set
	\begin{align*}
		F^{S}_{\epsilon}(\omega):=\sup_{n\geq 0}\Vert\psi^{n}_{\omega}|_{S_{\omega}}\Vert\exp(-n(\mu^{-}+\epsilon)), \ \ \ F^{U}_{\epsilon}(\omega):=\sup_{n\leq 0}\Vert\psi^{n}_{\omega}|_{U_{\omega}}\Vert\exp(n(-\mu^{+}+\epsilon)).
	\end{align*}
	Then
	\begin{align*}
		\lim_{m\rightarrow \infty}\frac{1}{m}\log^{+}[F^{S}_{\epsilon}(\theta^{m}\omega)]=\lim_{m\rightarrow-\infty}\frac{1}{m}\log^{+}[F^{U}_{\epsilon}(\theta^{m})]=0.
	\end{align*}
	Similarly for
	\begin{align*}
		F^{C,1}_{\epsilon}(\omega):=\sup_{n\geq 0}\Vert\psi^{n}_{\omega}|_{C_{\omega}}\Vert\exp(-n\epsilon), \ \ \ F^{C,-1}_{\epsilon}(\omega):=\sup_{n\leq 0}\Vert\psi_{\omega}^{n}|_{C_\omega}\Vert\exp(n\epsilon),
	\end{align*}
	we have
	\begin{align*}
		\lim_{n\rightarrow \infty}\frac{1}{m}\log^{+}[F^{C,1}_{\epsilon}(\theta^{m}\omega)]=\lim_{m\rightarrow-\infty}\frac{1}{m}\log^{+}[F^{C,-1}_{\epsilon}(\theta^{m}\omega)]=0.
	\end{align*}
\end{lemma}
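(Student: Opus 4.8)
The plan is to reduce all four statements to a single sub-exponential growth estimate obtained by combining the Multiplicative Ergodic Theorem (Theorem \ref{thm:MET_Banach_fields}) with the previous Lemma, and then to invoke the Borel--Cantelli argument that is standard for such \emph{tempered} random variables. I will treat $F^S_\epsilon$ in detail; the other three cases are entirely analogous after replacing $S_\omega$ by $U_\omega$ or $C_\omega$, reversing the direction of time, and adjusting the Lyapunov exponent that is subtracted.

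First I would apply the Lemma to the block ${H}^{p,q}_{\omega}$ that equals $S_\omega = F_{\mu^-}(\omega)$; more precisely, since $S_\omega$ is the ``slow'' space $F_{\mu^-}(\omega)$, which is not finite-dimensional, I would instead apply it on the finite-dimensional complements, or — cleaner — use directly the estimate on $\Vert \psi^n_{\theta^m\omega}|_{S_{\theta^m\omega}}\Vert$ coming from $\Vert\psi^n_{\theta^m\omega}|_{S_{\theta^m\omega}}\Vert \le \Vert\psi^{m+n}_\omega|_{S_\omega}\Vert\,\Vert(\psi^m_\omega|_{S_\omega})^{-1}\Vert$ together with the fact that on $S_\omega$ the cocycle decays at rate $\le \mu^-<0$. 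The MET gives, for each fixed $\omega\in\tilde\Omega$, $\limsup_{n\to\infty}\frac1n\log\Vert\psi^n_\omega|_{S_\omega}\Vert \le \mu^-$, hence $\Vert\psi^n_\omega|_{S_\omega}\Vert\exp(-n(\mu^-+\epsilon))\to 0$, so $F^S_\epsilon(\omega)$ is \emph{finite} for every $\omega\in\tilde\Omega$. That already makes $F^S_\epsilon\colon\tilde\Omega\to[0,\infty)$ well defined and measurable (as a countable supremum of measurable maps).

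Next I would establish the cocycle-type submultiplicativity that drives the temperedness. Writing $n = m + (n-m)$ and using $\psi^n_\omega = \psi^{n-m}_{\theta^m\omega}\circ\psi^m_\omega$ on the invariant subspace $S_\omega$ (invariance of $S_\omega$ under $\psi$ follows from Theorem \ref{thm:MET_Banach_fields}(i) applied to all $H^i$ with $\mu_i\le\mu^-$ together with $F_{\mu^-}(\omega)=\bigoplus_{\mu_i\le\mu^-}H^i_\omega$), one gets for $k\ge 0$
\begin{align*}
	F^S_\epsilon(\theta^m\omega) \le \Vert(\psi^m_\omega|_{S_\omega})^{-1}\Vert\, e^{-m(\mu^-+\epsilon)}\, F^S_\epsilon(\omega) \quad\text{when } m\ge 0,
\end{align*}
or a comparable bound — the point is only that $\frac1m\log^+ F^S_\epsilon(\theta^m\omega)$ is controlled by $\frac1m\log^+\Vert(\psi^m_\omega|_{S_\omega})^{-1}\Vert$ plus a vanishing term, and the former has zero exponential growth rate by the MET (the inverse on the finite-codimensional fast part, resp. the stable part, grows subexponentially by item (iii)). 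Alternatively, and this is the cleanest route, one shows directly that $\log^+ F^S_\epsilon$ is a tempered random variable by the classical criterion: a measurable $V\ge 0$ with $\limsup_{m\to\infty}\frac1m\log^+ V(\theta^m\omega)\le 0$ a.s.\ is equivalent to $V(\theta^m\omega)/e^{\delta m}\to 0$ for every $\delta>0$, which in turn follows once one knows $V(\omega)<\infty$ a.s.\ and $V(\theta\omega)\le c(\omega)V(\omega)$ with $\log^+ c\in L^1$; here $c(\omega)$ can be taken as a constant multiple of $\Vert\psi_\omega\Vert$ and $\Vert(\psi^1_\omega|_{S_\omega})^{-1}\Vert$-type factors, whose logarithms are integrable by Assumption \ref{ASSU} and the MET integrability hypothesis. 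Then Birkhoff's ergodic theorem gives $\frac1m\log^+ c(\theta^m\omega)\to 0$ and a telescoping/Borel--Cantelli argument yields the claim.

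The main obstacle I expect is handling the non-injectivity of the cocycle and the infinite-dimensionality of $S_\omega$ simultaneously: the Lemma as stated is for finite-dimensional blocks ${H}^{p,q}_\omega$, so for $F^S_\epsilon$ one cannot apply it verbatim and must instead argue via the decay rate $\mu^-$ on $S_\omega=F_{\mu^-}(\omega)$ directly from the MET, which only gives an asymptotic $\limsup$ rather than a uniform bound; bridging from the asymptotic statement to finiteness of the supremum over \emph{all} $n\ge 0$, and then to temperedness of $\omega\mapsto F^S_\epsilon(\omega)$, is the delicate point. For $F^{C,1}_\epsilon$ and $F^{C,-1}_\epsilon$ the situation is better since $C_\omega = H^{i_c}_\omega$ is finite-dimensional and the Lemma applies directly with $\mu_{i_c}=0$; for $F^U_\epsilon$ one uses that $U_\omega$ is finite-dimensional too, together with the backward estimate in item (iii) for the inverse, to get the rate $-\mu^+$ as $n\to-\infty$ and then temperedness in the reversed time direction (replace $\theta$ by $\theta^{-1}$, which is again ergodic measure-preserving).
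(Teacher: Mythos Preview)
Your treatment of $F^S_\epsilon$ has two genuine gaps. First, the identity $F_{\mu^-}(\omega)=\bigoplus_{\mu_i\le\mu^-}H^i_\omega$ that you use is false in this setting: in Theorem \ref{thm:MET_Banach_fields} the stable space $S_\omega=F_{\mu^-}(\omega)$ is the infinite-dimensional ``slow'' remainder of finite codimension, not a direct sum of the finite-dimensional Oseledets spaces $H^i_\omega$ (those are only the fast-growing complements). Second, and more damaging, you repeatedly invoke $\Vert(\psi^m_\omega|_{S_\omega})^{-1}\Vert$. The cocycle is only assumed compact, not injective, and on the infinite-dimensional $S_\omega$ there is no inverse in general; hence neither your displayed submultiplicative bound nor the ``cleanest route'' estimate $V(\theta\omega)\le c(\omega)V(\omega)$ with $c$ built from such inverse norms is available. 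You correctly flag this as ``the main obstacle'', but none of the proposed workarounds actually circumvents it. The paper does not attempt a self-contained proof for $F^S_\epsilon$ here; it cites \cite[Lemma 2.3]{GVR23}, where an argument not relying on inverses on $S_\omega$ is given.

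For the finite-dimensional cases $F^U_\epsilon$ and $F^{C,\pm 1}_\epsilon$ your plan to apply the preceding Lemma is correct and is exactly what the paper does. What your sketch is missing is the ingredient that controls the \emph{denominator} in that estimate, namely $d\big(\psi^m_\omega(\xi^t_\omega),\langle\psi^m_\omega(\xi^{t'}_\omega)\rangle_{t'\ne t}\big)$ as $m\to\pm\infty$. This is precisely the angle-vanishing property, item (iv) of Theorem \ref{thm:MET_Banach_fields}, which you never invoke; item (iii) alone only handles the numerator $\Vert\psi^{m+n}_\omega(\xi^t_\omega)\Vert$. The paper's proof of the $F^{C,-1}_\epsilon$ case consists of the preceding Lemma followed by (iv), and without (iv) the argument does not close.
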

\begin{proof}
	For $F^{S}_{\varepsilon}$, the statement is shown in \cite[Lemma 2.3]{GVR23}. We only prove the claim for $F^{C,-1}_{\varepsilon}$, the remaining assertions can be shown in the same way. Assume that \( C_{\omega} = \langle \xi^{t}_{\omega} \rangle_{1 \leq t \leq k} \). Applying \eqref{Operator _ Estimate} yields
	\begin{align*}
		F^{C,-1}(\theta^{m}\omega)&\leq\sup_{n\leq 0}\Vert\psi_{\omega}^{n}(\theta^{m}\omega)\Vert\exp(n\epsilon)\\ \quad &\leq\sup_{n\leq 0}\big{[}\sum_{1\leq t\leq k}\frac{\Vert\psi^{m+n}_{\omega}(\xi^{t}_{\omega})\Vert}{d\big{(}\psi_{\omega}^{m}(\xi^{t}_{\omega}), \langle \psi_{\omega}^{m}(\xi^{t^{\prime}}_{\omega}) \rangle_{1\leq t^{\prime}\leq k, t^{\prime}\neq t}\big{)}}\exp(n\epsilon)\big{]}.
	\end{align*}
	Using (iv) in Theorem \ref{thm:MET_Banach_fields} proves the claim.
\end{proof}
\begin{proposition}\label{CENTER}
	For $\nu>0$, let ${\Sigma}_\omega \coloneqq \prod_{j\in\mathbb{Z}}E_{\theta^{j}\omega}$ and
	\begin{align*}
		\Sigma_{\omega}^{\nu} \coloneqq \bigg{\lbrace}\Gamma\in{\Sigma}_\omega: \ \Vert\Gamma\Vert=\sup_{j\in\mathbb{Z}}\big{[}\Vert\Pi^{j}_{\omega}[\Gamma]\Vert\exp(-\nu \vert j\vert)\big{]}<\infty \bigg{\rbrace}.
	\end{align*}
	Assume that $0 < \nu < \min\{\mu^+, - \mu^-\}$. Then there is a random variable $\rho \colon \Omega \to (0,\infty)$ such that the following map is well defined:
	\begin{align*}
		&I_{_{\omega}} \colon C_{\omega}\times\Sigma_{\omega}^{\nu} \cap B(0,R(\omega))\rightarrow\Sigma_{\omega}^{\nu}, \\
		&{\Pi}^{n}_{\omega}\big{[}I_{\omega}(v_{\omega} ,\Gamma)\big{]} = \begin{cases}
			\psi^{n}_{\omega}(v_{\omega}) + C(n,\omega,\Gamma)  \\
			\quad - \sum_{j\geq n+1}\big{[}\psi^{n-j}_{\theta^{j}\omega}\circ\Pi_{U_{\theta^{j}\omega}\parallel C_{\theta^{j}\omega}\oplus S_{\theta^{j}\omega}}\big{]}  P_{\theta^{j-1}\omega,\rho}\big{(}\Pi^{j-1}_{\omega}[\Gamma]\big{)}\\
			\qquad +\sum_{j\leq n}[\psi^{n-j}_{\theta^{j}\omega}\circ\Pi_{S_{\theta^{j}\omega}\parallel U_{\theta^{j}\omega}\oplus C_{\theta^{j}\omega} }]P_{\theta^{j-1}\omega,\rho}(\Pi^{j-1}_{\omega}[\Gamma])  &\text{for } n \ne 0,\\
			\\
			v_{\omega}-\sum_{j \geq 1}\big{[}\psi^{-j}_{\theta^{j}\omega}\Pi_{U_{\theta^{j}\omega}\parallel C_{\theta^{j}\omega}\oplus S_{\theta^{j}\omega}}\big{]}P_{\theta^{j-1}\omega,\rho}(\Pi^{j-1}_{\omega}[\Gamma])\\ \quad +\sum_{j\leq 0}[\psi^{-j}_{\theta^{j}\omega}\circ\Pi_{S_{\theta^{j}\omega}\parallel U_{\theta^{j}\omega}\oplus C_{\theta^{j}\omega}}]P_{\theta^{j-1}\omega,\rho}(\Pi^{j-1}_{\omega}[\Gamma]) &\text{for } n = 0,
		\end{cases}
	\end{align*}
	where
	\begin{align*}
		C(n,\omega,\Gamma)=\begin{cases}
			\sum_{1\leq j\leq n}\big{[}\psi^{n-j}_{\theta^{j}\omega}\circ\Pi_{ C_{\theta^{j}\omega}\parallel S_{\theta^{j}\omega}\oplus U_{\theta^{j}\omega}}\big{]}P_{\theta^{j-1}\omega,\rho}\big{(}\Pi^{j-1}_{\omega}[\Gamma]\big{)}\
			&\text{for } n>0,\\
			\\
			-\sum_{n\leq j\leq -1}\big{[}\psi^{n-j}_{\theta^{j}\omega}\circ\Pi_{ C_{\theta^{j}\omega}\parallel S_{\theta^{j}\omega}\oplus U_{\theta^{j}\omega}}\big{]}P_{\theta^{j-1}\omega,\rho}\big{(}\Pi^{j-1}_{\omega}[\Gamma]\big{)}\
			&\text{for } n<0,
		\end{cases}
	\end{align*}
	In addition, for every $v_{\omega} \in C_\omega$, the equation $I_{\omega}(v_{\omega},\Gamma)=\Gamma$ admits a unique solution.
\end{proposition}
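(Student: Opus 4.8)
The plan is to realize $I_\omega(v_\omega,\cdot)$ as a contraction on the Banach space $\Sigma_\omega^\nu$ (intersected with a suitable ball) and apply the Banach fixed point theorem, after first checking that $I_\omega$ maps this set into itself. The first task is well-definedness: for fixed $\omega\in\tilde\Omega$, $v_\omega\in C_\omega$ and $\Gamma\in\Sigma_\omega^\nu\cap B(0,R(\omega))$, one must show each of the three (bi-infinite) series defining $\Pi^n_\omega[I_\omega(v_\omega,\Gamma)]$ converges in $E_{\theta^n\omega}$ and that the resulting section has finite $\nu$-weighted norm. This is exactly where Lemma \ref{lemma:prop_F} and the estimate \eqref{pro_invv} enter: the summands in the $U$-part carry a factor $\|\psi^{n-j}_{\theta^j\omega}|_{U_{\theta^j\omega}}\|$ which decays like $e^{(n-j)\mu^+}$ for $j\geq n$ (up to the subexponential correction $F^U_\epsilon$), the $S$-part carries $\|\psi^{n-j}_{\theta^j\omega}|_{S_{\theta^j\omega}}\|\sim e^{(n-j)\mu^-}$ for $j\leq n$, and the $C$-part is controlled on the finite range $[1,n]$ (resp.\ $[n,-1]$) by $F^{C,\pm1}_\epsilon$. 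Against these one pairs the bound $\|P_{\theta^{j-1}\omega,\rho}(\Pi^{j-1}_\omega[\Gamma])\|\leq C\,f(\theta^j\omega)h(4\rho(\theta^j\omega))$ from \eqref{LIPs} together with $\|\Pi^{j-1}_\omega[\Gamma]\|\leq\|\Gamma\|e^{\nu|j-1|}$; since $f(\theta^j\omega)$, $F^{S}_\epsilon(\theta^j\omega)$, $F^{U}_\epsilon(\theta^j\omega)$, $\rho(\theta^j\omega)$ are all subexponential in $|j|$ (the first three by Lemma \ref{lemma:prop_F} and Assumption \ref{ASSU}, $\rho$ by our freedom to choose it, e.g.\ tempered), choosing $\nu<\min\{\mu^+,-\mu^-\}$ and $\epsilon$ small makes the geometric factors $e^{(n-j)(\mu^+-\nu-\epsilon)}$ and $e^{(j-n)(-\mu^-+\nu-\epsilon)}$ summable, with the sum of order $e^{\nu|n|}\times(\text{subexponential in }|n|)$.

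The second task is the contraction estimate. Subtracting $\Pi^n_\omega[I_\omega(v_\omega,\Gamma)]-\Pi^n_\omega[I_\omega(v_\omega,\tilde\Gamma)]$, the $\psi^n_\omega(v_\omega)$ terms cancel and one is left with the same three series but with $P_{\theta^{j-1}\omega,\rho}(\Pi^{j-1}_\omega[\Gamma])-P_{\theta^{j-1}\omega,\rho}(\Pi^{j-1}_\omega[\tilde\Gamma])$ in place of $P_{\theta^{j-1}\omega,\rho}(\Pi^{j-1}_\omega[\Gamma])$. Now \eqref{LIPs} gives the genuinely useful bound
\begin{align*}
\|P_{\theta^{j-1}\omega,\rho}(\Pi^{j-1}_\omega[\Gamma])-P_{\theta^{j-1}\omega,\rho}(\Pi^{j-1}_\omega[\tilde\Gamma])\|\leq 5 f(\theta^j\omega)h(4\rho(\theta^j\omega))\,\|\Pi^{j-1}_\omega[\Gamma]-\Pi^{j-1}_\omega[\tilde\Gamma]\|,
\end{align*}
and $\|\Pi^{j-1}_\omega[\Gamma]-\Pi^{j-1}_\omega[\tilde\Gamma]\|\leq\|\Gamma-\tilde\Gamma\|e^{\nu|j-1|}$. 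Running the same summation as above, the weighted norm $\|I_\omega(v_\omega,\Gamma)-I_\omega(v_\omega,\tilde\Gamma)\|$ is bounded by $\|\Gamma-\tilde\Gamma\|$ times $\sup_n\big(e^{-\nu|n|}\sum_j(\text{geometric})\cdot f(\theta^j\omega)h(4\rho(\theta^j\omega))\big)$. The point of the cutoff function $\delta$ and the free random variable $\rho$ is precisely that $h(4\rho(\theta^j\omega))\to 0$ as $\rho\to 0$ (using $h(0)=0$ and monotonicity), so by choosing $\rho$ small enough pointwise — say $\rho(\omega)$ so small that the resulting supremum is $\leq 1/2$, which is possible because the $j$-sum converges uniformly in $n$ once the exponential rates are fixed and $f,F^{S/U/C}_\epsilon$ are tempered — we obtain a contraction with constant $\leq 1/2$. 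One also checks $I_\omega(v_\omega,\cdot)$ maps the closed ball $B(0,R(\omega))\cap\Sigma_\omega^\nu$ into itself, again by shrinking $\rho$ (the image norm is bounded by $\|\psi^\bullet_\omega(v_\omega)\|$-type terms plus a term made small by $\rho$; here one uses that $v_\omega\in C_\omega$ so $\|\psi^n_\omega(v_\omega)\|\leq F^{C,\pm1}_\epsilon(\cdot)e^{\epsilon|n|}\|v_\omega\|$ is subexponential, hence lies in $\Sigma_\omega^\nu$), possibly at the cost of also shrinking the radius on the $C_\omega$-factor. Banach's fixed point theorem then yields the unique $\Gamma$ with $I_\omega(v_\omega,\Gamma)=\Gamma$.

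I expect the main obstacle to be the bookkeeping in the well-definedness and contraction estimates: one must carefully split into the cases $n>0$, $n<0$, $n=0$ and, inside each, track how the weight $e^{-\nu|n|}$ interacts with $e^{\nu|j-1|}$ and the three regimes $j>n$, $j\le n$, $1\le j\le n$ (resp.\ $n\le j\le -1$), always peeling off the subexponential factors via the temperedness statements so that a clean geometric series in $|n-j|$ remains. A subtle point worth isolating as a preliminary step is the choice of $\rho$: it must simultaneously be small enough (for the contraction and self-mapping bounds) and tempered, i.e.\ $\lim_{n\to\pm\infty}\frac1n\log\rho(\theta^n\omega)=0$, so that $\rho(\theta^j\omega)$ does not destroy the summability; a standard construction (infimum over the orbit of a fixed small constant divided by the relevant tempered majorants, then further tempered-regularized) produces such a $\rho$, and one should verify measurability of $\omega\mapsto\rho(\omega)$ along the way. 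Once $\rho$ is fixed, everything else is the contraction-mapping argument sketched above.
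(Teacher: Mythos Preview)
Your proposal is correct and follows the same contraction-mapping strategy as the paper: estimate each of the three series via the bounds $F^{S}_\epsilon,F^{U}_\epsilon,F^{C,\pm1}_\epsilon$ of Lemma~\ref{lemma:prop_F} together with \eqref{LIPs}, then apply Banach's fixed point theorem on $\Sigma_\omega^\nu$. The paper's only sharpening of your sketch is an explicit choice $\rho(\omega)=\tfrac14\,h^{-1}(T(\omega))$, where $T(\omega)$ equals a small constant $M_\epsilon$ divided by $f(\omega)$ times the largest of the four quantities $F^{C,\pm1}_\epsilon(\omega)\|\Pi_C\|$, $F^U_\epsilon(\omega)\|\Pi_U\|$, $F^S_\epsilon(\omega)\|\Pi_S\|$; with this choice the product $f(\theta^j\omega)\,h(4\rho(\theta^j\omega))\,F^{\bullet}_\epsilon(\theta^j\omega)\,\|\Pi_\bullet\|$ collapses to the constant $M_\epsilon$ term by term, leaving pure geometric series and a global Lipschitz constant $5L_\epsilon$ proportional to $M_\epsilon$ --- so no separate self-mapping check on a ball is needed, and temperedness of $\rho$ follows directly from $\limsup_{x\to0}h(x)/|x|^r<\infty$.
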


\begin{proof}
	By definition
	\begin{align*}
		\big{\Vert}\Pi^{n}_{\omega}&[I_{\omega}(v_{\omega},\Gamma)]\big{\Vert}\leq\Vert\psi^{n}_{\omega}|_{C_\omega}\Vert \ \Vert v_{\omega}\Vert\\
		&\quad +\sum_{\substack{n\leq j\leq -1 \ \text{or}\\ 1\leq j\leq n}}\Vert\psi^{n-j}_{\theta^{j}\omega}|_{C_{\theta^{j}\omega}}\Vert \ \Vert\Pi_{C_{\theta^{j}\omega}\parallel S_{\theta^{j}\omega}\oplus U_{\theta^{j}\omega}}\Vert \ \Vert\Pi^{j-1}_{\omega}[\Gamma]\Vert f(\theta^{j}\omega)h\big(\Vert\Pi^{j-1}_{\omega}[\Gamma]\Vert\big)\delta \biggl(\frac{\Vert\Pi^{j-1}_{\omega}[\Gamma]\Vert}{\rho(\theta^{j}\omega)} \biggr)
		\\
		&\qquad+\sum_{j\geq n+1} \Vert\psi^{n-j}_{\theta^{j}\omega}|_{U_{\theta^{j}\omega}}\Vert \ \Vert\Pi_{U_{\theta^{j}\omega}\parallel C_{\theta^{j}\omega}\oplus S_{\theta^{j}\omega}}\Vert \ \Vert\Pi^{j-1}_{\omega}[\Gamma]\Vert f(\theta^{j}\omega)h\big(\Vert\Pi^{j-1}_{\omega}[\Gamma]\Vert\big)\delta  \biggl( \frac{\Vert\Pi^{j-1}_{\omega}[\Gamma]\Vert}{\rho(\theta^{j}\omega)} \biggr) \\
		&\ \qquad +\sum_{j\leq n}\Vert\psi^{n-j}_{\theta^{j}\omega}|_{S_{\theta^{j}\omega}}\Vert \ \Vert\Pi_{S_{\theta^{j}\omega}\parallel U_{\theta^{j}\omega}\oplus C_{\theta^{j}\omega}}\Vert \ \Vert\Pi^{j-1}_{\omega}[\Gamma]\Vert f(\theta^{j}\omega)h\big(\Vert\Pi^{j-1}_{\omega}[\Gamma]\Vert\big)\delta \biggl( \frac{\Vert\Pi^{j-1}_{\omega}[\Gamma]\Vert}{\rho(\theta^{j}\omega)} \biggr).
	\end{align*}
	Choose $0 < \varepsilon < \nu$. Remember that $\Vert\Pi^{j}_{\omega}[\Gamma]\Vert\leq \exp(\nu|j|)\Vert\Gamma\Vert$. Consequently,
	\begin{align*}
		&\exp(-\nu|n|)\big{\vert}\Pi^{n}_{\omega}[I_{\omega}(v_{\omega},\Gamma)]\big{\vert} \leq \exp((\epsilon-\nu)|n|)F^{C,sgn(n)}_{\epsilon}(\omega)\Vert v_{\omega}\Vert\\
		&\qquad + \Vert\Gamma\Vert\sum_{\substack{n\leq j\leq -1 \ \text{or}\\ 1\leq j\leq n}}\exp((\epsilon-\nu)|n-j|+\nu)F^{C,sgn(n)}_{\epsilon}(\theta^{j}\omega)\Vert\Pi_{C_{\theta^{j}\omega}\parallel S_{\theta^{j}\omega}\oplus U_{\theta^{j}\omega}}\Vert f(\theta^{j}\omega) h\big(2\rho(\theta^{j}\omega)\big)\\
		&\qquad + \Vert\Gamma\Vert\sum_{j\geq n+1}\exp((j-n)(-\mu^++\epsilon+\nu)+\nu)F^{U}_{\epsilon}(\theta^{j}\omega)\Vert\Pi_{U_{\theta^{j}\omega}\parallel C_{\theta^{j}\omega}\oplus S_{\theta^{j}\omega}}\Vert f(\theta^{j}\omega)h(2\rho(\theta^{j}\omega))\\
		&\qquad + \Vert\Gamma\Vert\sum_{j\leq n}\exp((n-j)(\mu^-+\epsilon+\nu)+\nu)F^{S}_{\epsilon}(\theta^{j}\omega)\Vert\Vert\Pi_{S_{\theta^{j}\omega}\parallel U_{\theta^{j}\omega}\oplus C_{\theta^{j}\omega}}\Vert f(\theta^{j}\omega)h\big(2\rho(\theta^{j}\omega)\big).
	\end{align*}
	Similarly, by \eqref{LIPs},
	\begin{align*}
		&\exp(-\nu|n|)\big{\Vert}\Pi^{n}_{\omega}[I_{\omega}(v_{\omega},\Gamma)]-\Pi^{n}_{\omega}[I_{\omega}(v_{\omega},\tilde{\Gamma})]\big{\Vert}\\ 
		\leq\ &\Vert\Gamma-\tilde{\Gamma}\Vert\sum_{\substack{n\leq j\leq -1 \ \text{or}\\ 1\leq j\leq n}}5\exp((\epsilon-\nu)|n-j|+\nu)F^{C,sgn(n)}_{\epsilon}(\theta^{j}\omega)\Vert\Pi_{C_{\theta^{j}\omega}\parallel S_{\theta^{j}\omega}\oplus U_{\theta^{j}\omega}}\Vert\, f(\theta^{j}\omega)h(4\rho(\theta^{j}\omega))\\
		&\ + \Vert\Gamma-\tilde{\Gamma}\Vert\sum_{j\geq n+1}5\exp((j-n)(-\mu^++\epsilon+\nu)+\nu)F^{U}_{\epsilon}(\theta^{j}\omega)\Vert\Pi_{U_{\theta^{j}\omega}\parallel C_{\theta^{j}\omega}\oplus S_{\theta^{j}\omega}}\Vert\, f(\theta^{j}\omega)h(4\rho(\theta^{j}\omega))\\
		&\ + \Vert\Gamma-\tilde{\Gamma}\Vert\sum_{j\leq n}5\exp((n-j)(\mu^-+\epsilon+\nu)+\nu)F^{S}_{\epsilon}(\theta^{j}\omega)\Vert\Vert\Pi_{S_{\theta^{j}\omega}\parallel U_{\theta^{j}\omega}\oplus C_{\theta^{j}\omega}}\Vert\, f(\theta^{j}\omega)h(4\rho(\theta^{j}\omega)).
	\end{align*}
	For some given $M_{\epsilon}>0$, let  
	\begin{align}\label{eqn:T_omega}
		T(\omega) \coloneqq \frac{M_{\epsilon}}{f(\omega)} \min \biggl\{ \frac{1}{F_{\epsilon}^{C,1}(\omega) \Vert\Pi_{C_{\omega}\parallel S_{\omega}\oplus U_{\omega}}\Vert},&\frac{1}{F_{\epsilon}^{C,-1}(\omega) \Vert\Pi_{C_{\omega}\parallel S_{\omega}\oplus U_{\omega}}\Vert f(\omega)} ,\\& \frac{1}{F_{\epsilon}^{U}(\omega) \Vert\Pi_{U_{\omega}\parallel C_{\omega}\oplus S_{\omega}}\Vert}, \frac{1}{F_{\epsilon}^{S}(\omega) \Vert\Pi_{S_{\omega}\parallel U_{\omega}\oplus C_{\omega}}\Vert} \biggr\}
	\end{align}
	and
	\begin{align}\label{eqn:def_rho}
		\rho(\omega) \coloneqq \frac{1}{4} h^{-1}(T(\omega)).
	\end{align}
	Choosing $\varepsilon > 0$ sufficiently small, the recent inequalities yield
	\begin{align*}
		&\Vert I_{\omega}(v_{\omega} ,\Gamma)\Vert\leq \max \lbrace F^{C,-1}_{\epsilon}(\omega),F^{C,1}_{\epsilon}(\omega) \rbrace  \Vert v_{\omega}\Vert\\
		&\quad + \Vert \Gamma\Vert M_{\epsilon}\exp(\nu) \left(\frac{1}{1-\exp(\epsilon-\nu)}+\frac{\exp(-\mu^++\epsilon+\nu)}{1-\exp(-\mu^++\epsilon+\nu)}+\frac{1}{1-\exp(\mu^-+\epsilon+\nu)}\right)
	\end{align*}
	and
	\begin{align*}
		&\Vert I_{\omega}(v_{\omega} ,\Gamma) - I_{\omega}(v_{\omega} ,\tilde\Gamma)\Vert \\
		\leq\ &\|\Gamma - \Gamma \| 5M_{\epsilon}\exp(\nu) \left( \frac{1}{1-\exp(\epsilon-\nu)}+\frac{\exp(-\mu^++\epsilon+\nu)}{1-\exp(-\mu^++\epsilon+\nu)}+\frac{1}{1-\exp(\mu^-+\epsilon+\nu)} \right).
	\end{align*}
	Set 
	\begin{align*}
		L_{\epsilon} \coloneqq M_{\epsilon}\exp(\nu) \left( \frac{1}{1-\exp(\epsilon-\nu)}+\frac{\exp(-\mu^++\epsilon+\nu)}{1-\exp(-\mu^++\epsilon+\nu)}+\frac{1}{1-\exp(\mu^-+\epsilon+\nu)} \right).
	\end{align*}  
	Choosing $M_{\varepsilon} > 0$ sufficiently small, we can obtain that $5L_{\epsilon}<1$. With this choice, the map $I_{\omega}$ is indeed well defined and $I_{\omega}(v_{\omega},\cdot)$ is a contraction. Therefore, for every $v_{\omega}\in C_{\omega}$, the equation $I_{\omega}(v_{\omega},\Gamma_{v_\omega})=\Gamma_{v_\omega}$ has a unique solution with the property that
	\begin{align*}
		\| \Gamma_{v_\omega} \|\leq \frac{\max \lbrace F^{C,-1}_{\epsilon}(\omega),F^{C,1}_{\epsilon}(\omega) \rbrace \Vert v_{\omega}\Vert}{1-L_{\epsilon}}.
	\end{align*}
\end{proof}

We will need some further properties of the fixed points $\Gamma_{v_{\omega}} = I_{\omega}(v_{\omega},\Gamma_{v_{\omega}})$ which we will state now.
\begin{lemma}\label{lemma:I_lipschitz}
	Let $I_{\omega}$ be as in Proposition \ref{CENTER}. Then
	\begin{align*}
		\|\Gamma - \tilde{\Gamma} \| \leq 2 \max\{ F^{C,1}_{\nu}(\omega),  F^{C,-1}_{\nu}(\omega) \} \|v_{\omega} - \tilde{v}_{\omega} \|
	\end{align*}
	for every $v_{\omega}, \tilde{v}_{\omega} \in C_{\omega}$ where $\Gamma = \Gamma_{v_{\omega}}$ and $\tilde{\Gamma} = \Gamma_{\tilde{v}_{\omega}}$ denote the corresponding fixed points.
	
\end{lemma}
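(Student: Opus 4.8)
The plan is to exploit the two properties of $I_\omega$ established in the proof of Proposition \ref{CENTER}: that $I_\omega(v_\omega, \cdot)$ is a contraction uniformly in $v_\omega$ with Lipschitz constant $5L_\epsilon < 1$ (hence in particular $\leq L_\nu < 1$ once we note $\epsilon$ can be taken up to $\nu$), and that the dependence on $v_\omega$ enters only through the single term $\psi^n_\omega(v_\omega)$ (for $n \neq 0$) or $v_\omega$ itself (for $n = 0$). First I would write, for fixed $v_\omega, \tilde v_\omega \in C_\omega$ with corresponding fixed points $\Gamma = \Gamma_{v_\omega}$ and $\tilde\Gamma = \Gamma_{\tilde v_\omega}$,
\begin{align*}
	\|\Gamma - \tilde\Gamma\| = \|I_\omega(v_\omega, \Gamma) - I_\omega(\tilde v_\omega, \tilde\Gamma)\| \leq \|I_\omega(v_\omega,\Gamma) - I_\omega(\tilde v_\omega, \Gamma)\| + \|I_\omega(\tilde v_\omega,\Gamma) - I_\omega(\tilde v_\omega,\tilde\Gamma)\|.
\end{align*}
The second summand is bounded by $L_\nu \|\Gamma - \tilde\Gamma\|$ by the contraction estimate from Proposition \ref{CENTER}. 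For the first summand, since $I_\omega(v_\omega, \Gamma)$ and $I_\omega(\tilde v_\omega, \Gamma)$ differ only in the term linear in the $C_\omega$-component, I would compute
$\Pi^n_\omega[I_\omega(v_\omega,\Gamma)] - \Pi^n_\omega[I_\omega(\tilde v_\omega,\Gamma)] = \psi^n_\omega(v_\omega - \tilde v_\omega)$ for $n \neq 0$ and $= v_\omega - \tilde v_\omega$ for $n = 0$; multiplying by $\exp(-\nu|n|)$ and taking the supremum over $n \in \Z$ gives, by the definition of $F^{C,1}_\nu$ and $F^{C,-1}_\nu$ in Lemma \ref{lemma:prop_F}, the bound $\|I_\omega(v_\omega,\Gamma) - I_\omega(\tilde v_\omega,\Gamma)\| \leq \max\{F^{C,1}_\nu(\omega), F^{C,-1}_\nu(\omega)\}\, \|v_\omega - \tilde v_\omega\|$.

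Combining the two estimates yields $\|\Gamma - \tilde\Gamma\| \leq \max\{F^{C,1}_\nu(\omega), F^{C,-1}_\nu(\omega)\}\|v_\omega - \tilde v_\omega\| + L_\nu\|\Gamma - \tilde\Gamma\|$, so that
\begin{align*}
	\|\Gamma - \tilde\Gamma\| \leq \frac{\max\{F^{C,1}_\nu(\omega), F^{C,-1}_\nu(\omega)\}}{1 - L_\nu}\,\|v_\omega - \tilde v_\omega\|,
\end{align*}
and since $5L_\nu < 1$ forces $1 - L_\nu > 4/5$, hence $1/(1-L_\nu) < 5/4 < 2$, the claimed constant $2\max\{F^{C,1}_\nu(\omega), F^{C,-1}_\nu(\omega)\}$ follows.

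The one point that needs care — and the only real obstacle — is the bookkeeping around the parameter $\epsilon$ versus $\nu$: Proposition \ref{CENTER} is stated with an auxiliary $0 < \epsilon < \nu$ and produces $F^{C,\pm1}_\epsilon$, whereas the lemma is phrased with $F^{C,\pm1}_\nu$. I would address this by observing that $F^{C,1}_\nu \leq F^{C,1}_\epsilon$ and $F^{C,-1}_\nu \leq F^{C,-1}_\epsilon$ pointwise (since the exponential weights in Lemma \ref{lemma:prop_F} are more favourable for the larger exponent $\nu$), or alternatively by re-running the contraction argument directly with the weight $\exp(-\nu|n|)$ and no spare $\epsilon$, which is legitimate because the geometric series in the proof of Proposition \ref{CENTER} still converge when $\epsilon$ is replaced by $\nu$ in the $F^C$ terms as long as the genuinely hyperbolic directions retain a positive gap $\mu^+ - \nu > 0$ and $-\mu^- - \nu > 0$ — which is exactly the hypothesis $0 < \nu < \min\{\mu^+, -\mu^-\}$. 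Everything else is the routine triangle-inequality-plus-fixed-point manipulation sketched above.
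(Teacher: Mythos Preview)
Your approach is essentially identical to the paper's: triangle inequality on $\|I_\omega(v_\omega,\Gamma)-I_\omega(\tilde v_\omega,\tilde\Gamma)\|$, contraction bound on the second summand, and the observation that the first summand is $\sup_n e^{-\nu|n|}\|\psi^n_\omega(v_\omega-\tilde v_\omega)\|\leq \max\{F^{C,1}_\nu(\omega),F^{C,-1}_\nu(\omega)\}\|v_\omega-\tilde v_\omega\|$ directly from the definitions (so no $\epsilon$-vs-$\nu$ issue arises for this term---your extra discussion is harmless but unnecessary). The only slip is the constant bookkeeping at the end: the contraction constant from Proposition~\ref{CENTER} is $5L_\varepsilon$, and the paper gets the factor $2$ by sharpening the choice of $M_\varepsilon$ so that $5L_\varepsilon\leq\tfrac12$, whereas your chain ``$5L_\nu<1 \Rightarrow 1-L_\nu>4/5$'' conflates $L_\nu$ with $5L_\nu$; just replace this with the paper's observation that $M_\varepsilon$ (hence $\rho$) can be chosen so that the contraction constant is at most $\tfrac12$.
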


\begin{proof}
	From the triangle inequality,
	\begin{align*}
		\|I_{\omega}(v_{\omega},\Gamma) - I_{\omega}(\tilde{v}_{\omega},\tilde{\Gamma}) \| \leq \|I_{\omega}(v_{\omega},\Gamma) - I_{\omega}(\tilde{v}_{\omega},{\Gamma}) \| + \|I_{\omega}(\tilde{v}_{\omega},\Gamma) - I_{\omega}(\tilde{v}_{\omega},\tilde{\Gamma}) \|.
	\end{align*}
	As we have seen in the proof of Proposition \ref{CENTER},
	\begin{align*}
		\|I_{\omega}(\tilde{v}_{\omega},\Gamma) - I_{\omega}(\tilde{v}_{\omega},\tilde{\Gamma}) \| \leq 5 L_{\varepsilon} \|\Gamma - \tilde{\Gamma} \| = 5 L_{\varepsilon} \|I_{\omega}(v_{\omega},\Gamma)- I_{\omega}(\tilde{v}_{\omega},\tilde{\Gamma}) \|,
	\end{align*}
	thus
	\begin{align*}
		\|I_{\omega}(v_{\omega},\Gamma) - I_{\omega}(\tilde{v}_{\omega},\tilde{\Gamma}) \| \leq \frac{1}{1 - 5 L_{\varepsilon}} \|I_{\omega}(v_{\omega},\Gamma) - I_{\omega}(\tilde{v}_{\omega},{\Gamma}) \|.
	\end{align*}
	By definition of the map $I_{\omega}$,
	\begin{align*}
		\|I_{\omega}(v_{\omega},\Gamma) - I_{\omega}(\tilde{v}_{\omega},{\Gamma}) \| \leq \max\{ F^{C,1}_{\nu}(\omega),  F^{C,-1}_{\nu}(\omega) \} \|v_{\omega} - \tilde{v}_{\omega} \|.
	\end{align*}
	Choosing $5 L_{\varepsilon} \leq \frac{1}{2}$ in the proof of Proposition \ref{CENTER} yields the claim.
\end{proof}

\begin{lemma}\label{lemma:injecitvity}
	With the same notation as in Lemma \ref{lemma:I_lipschitz},
	\begin{align*}
		\Vert \Pi^{0}_{\omega}(\Gamma)-\Pi^{0}_{\omega}(\tilde{\Gamma})\Vert\geq \Vert v_{\omega} - \tilde{v}_{\omega}\Vert - \frac{1}{4 \max\{ F^{C,1}_{\nu}(\omega),  F^{C,-1}_{\nu}(\omega) \}} \Vert\Gamma-\tilde{\Gamma}\Vert.
	\end{align*}
	In particular,
	\begin{align*}
		\Vert v_{\omega} - \tilde{v}_{\omega}\Vert \leq 2 \Vert \Pi^{0}_{\omega}(\Gamma)-\Pi^{0}_{\omega}(\tilde{\Gamma})\Vert.
	\end{align*}
\end{lemma}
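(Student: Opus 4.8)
The plan is to read off the $n=0$ coordinate of the two fixed point identities $\Gamma = I_\omega(v_\omega,\Gamma)$ and $\tilde\Gamma = I_\omega(\tilde v_\omega,\tilde\Gamma)$ and to subtract them. The structural observation that makes this work is that, by the very definition of $I_\omega$ in Proposition~\ref{CENTER}, the line $n=0$ contains \emph{no} central sum: it reads $\Pi^0_\omega[\Gamma] = v_\omega - \Delta^U_\Gamma + \Delta^S_\Gamma$, where $\Delta^U_\Gamma := \sum_{j\ge 1}[\psi^{-j}_{\theta^j\omega}\circ\Pi_{U_{\theta^j\omega}\parallel C_{\theta^j\omega}\oplus S_{\theta^j\omega}}]P_{\theta^{j-1}\omega,\rho}(\Pi^{j-1}_\omega[\Gamma])$ lies in $U_\omega$ (using $\psi^{-j}_{\theta^j\omega}(U_{\theta^j\omega})=U_\omega$ for $j\ge 1$) and $\Delta^S_\Gamma := \sum_{j\le 0}[\psi^{-j}_{\theta^j\omega}\circ\Pi_{S_{\theta^j\omega}\parallel U_{\theta^j\omega}\oplus C_{\theta^j\omega}}]P_{\theta^{j-1}\omega,\rho}(\Pi^{j-1}_\omega[\Gamma])$ lies in $S_\omega$ (using $\psi^{-j}_{\theta^j\omega}(S_{\theta^j\omega})\subseteq S_\omega$ for $j\le 0$, which follows from the definition of $F_{\mu^-}$). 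In particular the $C_\omega$-component of $\Pi^0_\omega[\Gamma]$ is exactly $v_\omega$, which is the conceptual reason the parametrization $v_\omega\mapsto\Pi^0_\omega[\Gamma_{v_\omega}]$ should be injective. Subtracting and using the triangle inequality gives
\begin{align*}
	\Vert\Pi^0_\omega[\Gamma]-\Pi^0_\omega[\tilde\Gamma]\Vert \ \ge\ \Vert v_\omega-\tilde v_\omega\Vert - \Vert\Delta^U_\Gamma-\Delta^U_{\tilde\Gamma}\Vert - \Vert\Delta^S_\Gamma-\Delta^S_{\tilde\Gamma}\Vert .
\end{align*}

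Next I would estimate $\Vert\Delta^U_\Gamma-\Delta^U_{\tilde\Gamma}\Vert + \Vert\Delta^S_\Gamma-\Delta^S_{\tilde\Gamma}\Vert$ term by term, exactly as in the contraction estimate in the proof of Proposition~\ref{CENTER}, but with a genuine simplification: since $n=0$, only the geometrically convergent stable and unstable series occur, not the ``neutral'' one (ratio $\exp(\varepsilon-\nu)$) produced by the central sum. Concretely, for each $j$ one applies \eqref{LIPs}, bounds $\Vert\Pi^{j-1}_\omega[\Gamma-\tilde\Gamma]\Vert\le\exp(\nu|j-1|)\Vert\Gamma-\tilde\Gamma\Vert$, inserts $\Vert\psi^{-j}_{\theta^j\omega}|_{U_{\theta^j\omega}}\Vert\le F^U_\varepsilon(\theta^j\omega)\exp(-j(\mu^+-\varepsilon))$ and the analogous $F^S_\varepsilon$-bound from Lemma~\ref{lemma:prop_F}, and uses $h(4\rho(\theta^j\omega))=T(\theta^j\omega)$ together with \eqref{eqn:T_omega} to cancel $F^U_\varepsilon(\theta^j\omega)\Vert\Pi_{U_{\theta^j\omega}\parallel C_{\theta^j\omega}\oplus S_{\theta^j\omega}}\Vert$, resp.\ $F^S_\varepsilon(\theta^j\omega)\Vert\Pi_{S_{\theta^j\omega}\parallel U_{\theta^j\omega}\oplus C_{\theta^j\omega}}\Vert$. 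Summing the two resulting geometric series — convergent because $0<\nu<\min\{\mu^+,-\mu^-\}$ and $\varepsilon$ is small — bounds the error by $c\,M_\varepsilon\,\Vert\Gamma-\tilde\Gamma\Vert$ with $c$ depending only on $\varepsilon,\nu,\mu^\pm$. Since the constant $M_\varepsilon$ from the proof of Proposition~\ref{CENTER} is still free (there only $5L_\varepsilon<1$ was required), I would shrink it — keeping the two central factors $F^{C,\pm1}_\varepsilon$ carried along in \eqref{eqn:T_omega}, together with $F^{C,\pm1}_\nu\le F^{C,\pm1}_\varepsilon$, in the accounting — until the error is $\le \tfrac{1}{4\max\{F^{C,1}_\nu(\omega),F^{C,-1}_\nu(\omega)\}}\Vert\Gamma-\tilde\Gamma\Vert$. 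That gives the first inequality.

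The second assertion is then immediate: substituting $\Vert\Gamma-\tilde\Gamma\Vert\le 2\max\{F^{C,1}_\nu(\omega),F^{C,-1}_\nu(\omega)\}\Vert v_\omega-\tilde v_\omega\Vert$ from Lemma~\ref{lemma:I_lipschitz} into the first inequality turns the subtracted term into at most $\tfrac12\Vert v_\omega-\tilde v_\omega\Vert$, so $\Vert\Pi^0_\omega[\Gamma]-\Pi^0_\omega[\tilde\Gamma]\Vert\ge\tfrac12\Vert v_\omega-\tilde v_\omega\Vert$. The only genuinely delicate part is the bookkeeping in the second paragraph: getting the error dominated by the precise quantity $\tfrac{1}{4\max\{F^{C,1}_\nu,F^{C,-1}_\nu\}}\Vert\Gamma-\tilde\Gamma\Vert$, rather than merely by a fixed small multiple of $\Vert\Gamma-\tilde\Gamma\Vert$, is what forces one to be careful about which term of the minimum in \eqref{eqn:T_omega} is used and about the interplay with the a priori bound of Lemma~\ref{lemma:I_lipschitz}; everything else is a strictly easier rerun of the computation already done for Proposition~\ref{CENTER}.
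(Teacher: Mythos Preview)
Your overall strategy is exactly the paper's: subtract the two $n=0$ lines of the fixed-point identities, observe there is no central sum there, estimate the remaining $U$- and $S$-series with \eqref{LIPs} and the $F^U_\varepsilon,F^S_\varepsilon$ bounds, and then feed Lemma~\ref{lemma:I_lipschitz} back in. The second assertion is handled correctly.

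There is, however, a genuine gap in the ``delicate bookkeeping'' step, and you have in fact put your finger on it without resolving it. After you cancel $F^U_\varepsilon(\theta^j\omega)\Vert\Pi_{U_{\theta^j\omega}\parallel\cdots}\Vert$ (resp.\ $F^S_\varepsilon(\theta^j\omega)\Vert\Pi_{S_{\theta^j\omega}\parallel\cdots}\Vert$) against the $U$- (resp.\ $S$-) branch of the minimum in \eqref{eqn:T_omega}, the resulting bound on the error is $c\,M_\varepsilon\,\Vert\Gamma-\tilde\Gamma\Vert$ with $c=c(\varepsilon,\nu,\mu^\pm)$ \emph{independent of $\omega$}. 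No amount of shrinking the scalar $M_\varepsilon$ can then force this below $\tfrac{1}{4\max\{F^{C,1}_\nu(\omega),F^{C,-1}_\nu(\omega)\}}\Vert\Gamma-\tilde\Gamma\Vert$ uniformly, because $\max\{F^{C,1}_\nu(\omega),F^{C,-1}_\nu(\omega)\}$ is an $\omega$-dependent random variable that is in general unbounded. Trying instead to use the $C$-branches of the minimum in \eqref{eqn:T_omega} does not help either: that produces factors $1/F^{C,\pm1}_\varepsilon(\theta^j\omega)$ at the shifted points $\theta^j\omega$, not at the base point $\omega$, and leaves $F^U_\varepsilon(\theta^j\omega)$, $F^S_\varepsilon(\theta^j\omega)$ uncancelled.

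The paper's remedy is not to shrink $M_\varepsilon$ but to \emph{redefine} $\rho$: one introduces a second cutoff
\[
\tilde T(\omega)\ \coloneqq\ \frac{\tilde M_\varepsilon}{4\,f(\omega)\,\max\{F^{C,1}_\nu(\omega),F^{C,-1}_\nu(\omega)\}}\,\min\Bigl\{\tfrac{1}{F^U_\varepsilon(\omega)\Vert\Pi_{U_\omega\parallel\cdots}\Vert},\ \tfrac{1}{F^S_\varepsilon(\omega)\Vert\Pi_{S_\omega\parallel\cdots}\Vert}\Bigr\},
\]
and replaces \eqref{eqn:def_rho} by $\rho(\omega)\coloneqq\min\{\tfrac14 h^{-1}(\tilde T(\omega)),\,\tfrac14 h^{-1}(T(\omega))\}$. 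With this $\rho$, the estimate at $\theta^j\omega$ carries the factor $\tfrac{1}{4\max\{F^{C,1}_\nu(\theta^j\omega),F^{C,-1}_\nu(\theta^j\omega)\}}$ from the outset, and after summing one obtains the stated first inequality by choosing $\tilde M_\varepsilon$ so that the associated $5\tilde L_\varepsilon\le 1$. Because $\rho$ has only been made smaller, the conclusions of Proposition~\ref{CENTER} and Lemma~\ref{lemma:I_lipschitz} are unaffected; the subexponential growth of $\rho(\theta^n\omega)$ still holds by Lemma~\ref{lemma:prop_F}, \eqref{pro_invv} and the assumption on $h$ near $0$.
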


\begin{proof}
	By definition,
	\begin{align*}
		&\Pi^{0}_{\omega}(\Gamma)-\Pi^{0}_{\omega}(\tilde{\Gamma}) \\
		=\ &v_{\omega} - \tilde{v}_{\omega} - \sum_{j \geq 1}\big{[}\psi^{-j}_{\theta^{j}\omega}\Pi_{U_{\theta^{j}\omega}\parallel C_{\theta^{j}\omega}\oplus S_{\theta^{j}\omega}}\big{]}\big(P_{\theta^{j-1}\omega,\rho}(\Pi^{j-1}_{\omega}[\Gamma])-(P_{\theta^{j-1}\omega,\rho}(\Pi^{j-1}_{\omega}[\tilde{\Gamma}])\big) \\
		&\quad + \sum_{j\leq 0}[\psi^{-j}_{\theta^{j}\omega}\circ\Pi_{S_{\theta^{j}\omega}\parallel U_{\theta^{j}\omega}\oplus C_{\theta^{j}\omega}}]\big(P_{\theta^{j-1}\omega,\rho}(\Pi^{j-1}_{\omega}[\Gamma])-P_{\theta^{j-1}\omega,\rho}(\Pi^{j-1}_{\omega}[\tilde{\Gamma}])\big)
	\end{align*}
	Therefore, choosing $\varepsilon$ as in the proof of Proposition \ref{CENTER},
	\begin{align*}
		&\Vert \Pi^{0}_{\omega}(\Gamma) - \Pi^{0}_{\omega}(\tilde{\Gamma}) \Vert \\
		\geq\ &\Vert v_{\omega}-\tilde{v}_{\omega}\Vert - \Vert\Gamma-\tilde{\Gamma}\Vert\sum_{j\geq 1}5\exp((j-1)(-\mu^++\epsilon+\nu)+\nu)F^{U}_{\epsilon}(\theta^{j}\omega)\Vert\Pi_{U_{\theta^{j}\omega}\parallel C_{\theta^{j}\omega}\oplus S_{\theta^{j}\omega}}\Vert f(\theta^{j}\omega)h(4\rho(\theta^{j}\omega))\\
		&\quad - \Vert\Gamma-\tilde{\Gamma}\Vert\sum_{j\leq 0}5\exp((-j)(\mu^-+\epsilon+\nu)+\nu)F^{S}_{\epsilon}(\theta^{j}\omega)\Vert\Vert\Pi_{S_{\theta^{j}\omega}\parallel U_{\theta^{j}\omega}\oplus C_{\theta^{j}\omega}}\Vert f(\theta^{j}\omega)h(4\rho(\theta^{j}\omega)).
	\end{align*}
	For given $\tilde{M}_{\epsilon}>0$, define 
	\begin{align*}
		\tilde{T}(\omega) \coloneqq  \frac{\tilde{M}_{\epsilon}}{4 f(\omega) \max\{ F^{C,1}_{\nu}(\omega),  F^{C,-1}_{\nu}(\omega) \}}\min \left\{  \frac{1}{F_{\epsilon}^{U}(\omega) \Vert\Pi_{U_{\omega}\parallel C_{\omega}\oplus S_{\omega}}\Vert}, \frac{1}{F_{\epsilon}^{S}(\omega) \Vert\Pi_{S_{\omega}\parallel U_{\omega}\oplus C_{\omega}}\Vert} \right\}.
	\end{align*}
	We now redefine $\rho$ by setting
	\begin{align}\label{RHO}
		{\rho}(\omega) \coloneqq \min \left\{ \frac{1}{4} h^{-1}(\tilde{T}(\omega)), \frac{1}{4} h^{-1}(T(\omega) \right\}.
	\end{align}
	where $T(\omega)$ was defined in \eqref{eqn:T_omega}. Note that the implications of Proposition \ref{CENTER} remain valid for this new $\rho$. Define
	\begin{align*}
		\tilde{L}_{\varepsilon} \coloneqq \tilde{M}_{\varepsilon}\exp(\nu) \left\{ \frac{\exp(-\mu^++\epsilon+\nu)}{1-\exp(-\mu^++\epsilon+\nu)}+\frac{1}{1-\exp(\mu^-+\epsilon+\nu)} \right\}.
	\end{align*}
	Choosing $\varepsilon > 0$ sufficiently small gives
	\begin{align*}
		\Vert \Pi^{0}_{\omega}(\Gamma)-\Pi^{0}_{\omega}(\tilde{\Gamma})\Vert \geq \Vert v_{\omega} - \tilde{v}_{\omega}\Vert - \frac{5\tilde{L}_{\epsilon}}{4  \max\{ F^{C,1}_{\nu}(\omega),  F^{C,-1}_{\nu}(\omega) \}} \Vert\Gamma-\tilde{\Gamma}\Vert.
	\end{align*}
	Choosing $\tilde{M_{\varepsilon}}$ such that $5\tilde{L}_{\epsilon} \leq 1$ yields the first claim. The second follows by combining this estimate with Lemma \ref{lemma:I_lipschitz}.
\end{proof}

\begin{remark}
	Note that, by our assumptions,
	\begin{align*}
		\liminf_{n\rightarrow \pm \infty}\frac{1}{n}\log [\rho(\theta^{n}\omega)]\geq 0 .
	\end{align*}
	Indeed, this follows from the definition of $\rho$ and our assumption that $\limsup_{x\rightarrow 0}\frac{h(x)}{|x|^r}<\infty$.
\end{remark}

\begin{definition}\label{def:local_cocycle}
	For $\omega\in \Omega$ and  $\xi_{\omega}\in E_{\omega}$, set 
	\begin{align}
		\begin{split}
			{\phi}^{1}_{\omega}(Y_{\omega} + \xi_{\omega}) &\coloneqq Y_{\theta\omega}+\psi^{1}_{\omega}(\xi_{\omega})+ \delta \biggl( \frac{\Vert \xi_{\omega}\Vert}{\rho(\theta\omega)} \biggr) (\varphi^{1}_{\omega} (Y_{\omega}+\xi_{\omega})-Y_{\theta\omega}-\psi^{1}_{\omega}(\xi_\omega)) \quad \text{and} \\ \phi^{n}_{\omega}(Y_{\omega} + \xi_{\omega}) &\coloneqq \phi^{n-1}_{\theta\omega}\circ\phi^{1}_{\omega}(Y_{\omega}+\xi_{\omega}).
		\end{split}
	\end{align}
	
\end{definition}

\begin{remark}\label{remark:local_cocycle}
	Assume that for every $0\leq j\leq n$ it holds that $\Vert\varphi^{j}_{\omega}(Y_{\omega}+\xi_{\omega})-Y_{\theta^{j}\omega}\Vert\leq \rho(\theta^{j+1}\omega)$. Then $\phi^{i}_{\omega}(Y_{\omega}+\xi_{\omega})=\varphi^{i}_{\omega}(Y_{\omega}+\xi_{\omega})$ for every $0\leq i\leq n+1$. Therefore, $\phi$ is coincides with $\varphi$ locally up to a certain iteration and the number of iterations goes to infinity if $\Vert\xi_{\omega} \Vert\rightarrow 0$.
\end{remark}

\begin{definition}\label{definition_}
	Set
	\begin{align*}
		\mathcal{M}^{c,\nu}_{\omega}:=\lbrace\xi_{\omega}+Y_\omega\in E_{\omega}: \exists \Gamma \in{\Sigma}_\omega^{\nu} \ &\text{with} \ \Pi^{0}_{\omega}[\Gamma]=\xi_\omega\  \text{and} \\
		&\phi^{m}_{\theta^{n}\omega}(\Pi^{n}_{\omega}[\Gamma]+Y_{\theta^{n}\omega})=\Pi^{n+m}_{\omega}[\Gamma]+Y_{\theta^{n+m}\omega}\ \forall (m,n)\in\mathbb{N}\times \mathbb{Z} \rbrace.
	\end{align*} 
	We call this set the \emph{center manifold of the random map $\phi$.}
\end{definition}

\begin{lemma}\label{lemma:mane}
	With the same setting as in Proposition \ref{CENTER}, for $\Gamma\in \Sigma^{\nu}_{\omega}$,
	\begin{align}\label{center_}
		I_{\omega}(v_{\omega},\Gamma)=\Gamma \Longleftrightarrow \phi^{m}_{\theta^{n}\omega}(\Pi^{n}_{\omega}[\Gamma]+Y_{\theta^{n}\omega})=\Pi^{n+m}_{\omega}[\Gamma]+Y_{\theta^{n+m}\omega}, \ \ \forall (m,n)\in\mathbb{N}\times \mathbb{Z},
	\end{align}
	where
	\begin{align}\label{center_vector}
		\begin{split}
			& v_{\omega}=\Pi^{0}_{\omega}(\Gamma)+\sum_{j \geq 1}\big{[}\psi^{-j}_{\theta^{j}\omega}\Pi_{U_{\theta^{j}\omega}\parallel C_{\theta^{j}\omega}\oplus S_{\theta^{j}\omega}}\big{]}P_{\theta^{j-1}\omega,\rho}(\Pi^{j-1}_{\omega}[\Gamma]) -\\&\quad \sum_{j\leq 0}[\psi^{-j}_{\theta^{j}\omega}\circ\Pi_{S_{\theta^{j}\omega}\parallel U_{\theta^{j}\omega}\oplus C_{\theta^{j}\omega}}]P_{\theta^{j-1}\omega,\rho}(\Pi^{j-1}_{\omega}[\Gamma])\in C_{\omega}.
		\end{split}
	\end{align}
\end{lemma}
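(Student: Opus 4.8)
The plan is to show that the fixed-point equation $I_\omega(v_\omega,\Gamma)=\Gamma$ is, coordinate by coordinate, a discrete variation-of-constants (Lyapunov–Perron) representation of the orbit relation on the right-hand side, where the decomposition $E_{\theta^n\omega}=U_{\theta^n\omega}\oplus C_{\theta^n\omega}\oplus S_{\theta^n\omega}$ plays the role of the unstable/center/stable splitting. First I would establish the implication ``$\Leftarrow$''. Assume $\Gamma$ satisfies the orbit relation; writing $\xi^n_\omega:=\Pi^n_\omega[\Gamma]$, the relation $\phi^1_{\theta^n\omega}(\xi^n_\omega+Y_{\theta^n\omega})=\xi^{n+1}_\omega+Y_{\theta^{n+1}\omega}$ together with Definition \ref{def:local_cocycle} and the definition of $P_{\omega,\rho}$ gives the linear recursion
\begin{align*}
	\xi^{n+1}_\omega = \psi^1_{\theta^n\omega}(\xi^n_\omega) + P_{\theta^n\omega,\rho}(\xi^n_\omega), \qquad n\in\mathbb{Z}.
\end{align*}
Project this recursion onto the three invariant subfamilies $U$, $C$, $S$ using the projections $\Pi_{U_{\theta^j\omega}\parallel\cdots}$ etc., and use the cocycle property of $\psi$ on each piece (invertible on $U$ and $C$ by Theorem \ref{thm:MET_Banach_fields}(i) and the subsequent remark). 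On the $S$-component one sums the recursion forward from $-\infty$ to $n$, on the $U$-component one sums backward from $+\infty$ to $n+1$, and on the $C$-component one sums from the reference index $0$; the growth estimates from Lemma \ref{lemma:prop_F}, the sub-exponential bounds \eqref{pro_invv} on the projection norms, and the membership $\Gamma\in\Sigma^\nu_\omega$ with $0<\nu<\min\{\mu^+,-\mu^-\}$ guarantee that each of these series converges absolutely. Collecting the three pieces reproduces exactly the formula defining $\Pi^n_\omega[I_\omega(v_\omega,\Gamma)]$, with $v_\omega$ the $C$-component of $\Gamma$ shifted by the correction terms, i.e. \eqref{center_vector}; hence $I_\omega(v_\omega,\Gamma)=\Gamma$.

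For the converse ``$\Rightarrow$'', suppose $I_\omega(v_\omega,\Gamma)=\Gamma$ with $v_\omega$ given by \eqref{center_vector}. It suffices to verify the one-step relation $\phi^1_{\theta^n\omega}(\Pi^n_\omega[\Gamma]+Y_{\theta^n\omega})=\Pi^{n+1}_\omega[\Gamma]+Y_{\theta^{n+1}\omega}$ for every $n\in\mathbb{Z}$, since the general $(m,n)$ relation then follows by the cocycle property of $\phi$ (Definition \ref{def:local_cocycle}). By Remark \ref{remark:local_cocycle}, it is enough to check that $\psi^1_{\theta^n\omega}(\Pi^n_\omega[\Gamma]) + P_{\theta^n\omega,\rho}(\Pi^n_\omega[\Gamma]) = \Pi^{n+1}_\omega[\Gamma]$; and this is a direct telescoping computation: apply $\psi^1_{\theta^n\omega}$ to the explicit series for $\Pi^n_\omega[I_\omega(v_\omega,\Gamma)]$, use $\psi^1_{\theta^n\omega}\circ\psi^{n-j}_{\theta^j\omega}=\psi^{n+1-j}_{\theta^j\omega}$ together with the invariance of $U,C,S$ under $\psi$, and observe that the single term $j=n+1$ that is shifted out of the $U$-sum (respectively $j=n$ added to the $C$- and $S$-sums) recombines, via $\Pi_{U}+\Pi_{C}+\Pi_{S}=\operatorname{Id}$, into precisely $P_{\theta^n\omega,\rho}(\Pi^n_\omega[\Gamma])$. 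The separate treatment of the cases $n>0$, $n<0$, $n=0$ in the definition of $C(n,\omega,\Gamma)$ and of $\Pi^0_\omega$ is only bookkeeping and causes no difficulty. Finally, one checks $v_\omega\in C_\omega$: plugging $n=0$ into the $I_\omega$ formula and rearranging shows that $v_\omega$ defined by \eqref{center_vector} equals $\Pi^0_\omega[\Gamma]$ plus $U$- and $S$-valued terms, and since by the fixed-point formula $\Pi^0_\omega[I_\omega(v_\omega,\Gamma)]=\Pi^0_\omega[\Gamma]$ forces those correction terms to cancel the $U$- and $S$-parts of $\Pi^0_\omega[\Gamma]$, we get $v_\omega\in C_\omega$.

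The main obstacle I expect is the convergence bookkeeping in the ``$\Leftarrow$'' direction: one has to split the Lyapunov–Perron sums correctly across the three subspaces, keep track of the reference index $0$ for the center part (where there is no exponential gain, only the sub-exponential factors $F^{C,\pm1}_\epsilon$ and the $\exp(\nu|j|)$ growth of $\Gamma$), and make sure the choice $\nu<\min\{\mu^+,-\mu^-\}$ is actually strong enough to beat the center-part growth after inserting $\varepsilon$; this is essentially the same estimate already carried out inside the proof of Proposition \ref{CENTER}, so it can be quoted rather than redone. Everything else — the telescoping identities and the identification of the dropped summand with $P_{\theta^n\omega,\rho}$ — is routine algebra on finitely many terms, using only the invariance and invertibility of the MET splitting.
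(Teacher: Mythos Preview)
Your proposal is correct and follows essentially the same route as the paper: the ``$\Rightarrow$'' direction is handled by verifying the one-step recursion $\psi^1_{\theta^n\omega}(\Pi^n_\omega[\Gamma])+P_{\theta^n\omega,\rho}(\Pi^n_\omega[\Gamma])=\Pi^{n+1}_\omega[\Gamma]$ and then inducting on $m$ (the paper just says ``induction argument on $m$ for every fixed $n$''), while for ``$\Leftarrow$'' the paper carries out exactly the telescoping computation you describe, showing that the expression \eqref{center_vector} collapses to $\Pi_{C_\omega\parallel S_\omega\oplus U_\omega}(\Pi^0_\omega[\Gamma])\in C_\omega$ and that the analogous calculation at each $n$ recovers $\Pi^n_\omega[I_\omega(v_\omega,\Gamma)]=\Pi^n_\omega[\Gamma]$. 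Your write-up is in fact more explicit than the paper's about the convergence bookkeeping, but as you note this is already contained in the proof of Proposition~\ref{CENTER} and can simply be quoted.
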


\begin{proof}
	The left to the right part, can be accomplished by using an induction argument on  $m$ for every fixed $n$. For the the other part of the claim, first note that 
	\begin{align*}
		&\Pi^{0}_{\omega}(\Gamma)+\\ &\sum_{j \geq 1}\big{[}\psi^{-j}_{\theta^{j}\omega}\Pi_{U_{\theta^{j}\omega}\parallel C_{\theta^{j}\omega}\oplus S_{\theta^{j}\omega}}\big{]}P_{\theta^{j-1}\omega,\rho}(\Pi^{j-1}_{\omega}[\Gamma]) -\sum_{j\leq 0}[\psi^{-j}_{\theta^{j}\omega}\circ\Pi_{S_{\theta^{j}\omega}\parallel U_{\theta^{j}\omega}\oplus C_{\theta^{j}\omega}}]P_{\theta^{j-1}\omega,\rho}(\Pi^{j-1}_{\omega}[\Gamma])=\\ &\Pi^{0}_{\omega}(\Gamma)+\sum_{j \geq 1}\big{[}\psi^{-j}_{\theta^{j}\omega}\Pi_{U_{\theta^{j}\omega}\parallel C_{\theta^{j}\omega}\oplus S_{\theta^{j}\omega}}\big{]}\big(\phi^{1}_{\theta^{j-1}\omega}(\Pi^{j-1}_{\omega}[\Gamma]+Y_{\theta^{j-1}\omega})-Y_{\theta^{j}\omega}-\psi^{1}_{\theta^{j-1}\omega}(\Pi^{j-1}_{\omega}[\Gamma])\big) -\\&\sum_{j\leq 0}\big[\psi^{-j}_{\theta^{j}\omega}\circ\Pi_{S_{\theta^{j}\omega}\parallel U_{\theta^{j}\omega}\oplus C_{\theta^{j}\omega}}\big] \big(\phi^{1}_{\theta^{j-1}\omega}(\Pi^{j-1}_{\omega}[\Gamma]+Y_{\theta^{j-1}\omega})-Y_{\theta^{j}\omega}-\psi^{1}_{\theta^{j-1}\omega}(\Pi^{j-1}_{\omega}[\Gamma])\big)=\\
		& \Pi^{0}_{\omega}(\Gamma)+\sum_{j \geq 1}\big{[}\psi^{-j}_{\theta^{j}\omega}\Pi_{U_{\theta^{j}\omega}\parallel C_{\theta^{j}\omega}\oplus S_{\theta^{j}\omega}}\big{]}\big( \Pi^{j}_{\omega}[\Gamma]-\psi^{1}_{\theta^{j-1}\omega}(\Pi^{j-1}_{\omega}[\Gamma])\big)-\\& \sum_{j\leq 0}\big[\psi^{-j}_{\theta^{j}\omega}\circ\Pi_{S_{\theta^{j}\omega}\parallel U_{\theta^{j}\omega}\oplus C_{\theta^{j}\omega}}\big]\big( \Pi^{j}_{\omega}[\Gamma]-\psi^{1}_{\theta^{j-1}\omega}(\Pi^{j-1}_{\omega}[\Gamma])\big)=\Pi^{0}_{\omega}(\Gamma)-\Pi_{U_{\theta^{j}\omega}\parallel C_{\theta^{j}\omega}\oplus S_{\theta^{j}\omega}}(\Pi^{0}_{\omega}(\Gamma))-\\& \Pi_{S_{\theta^{j}\omega}\parallel U_{\theta^{j}\omega}\oplus C_{\theta^{j}\omega}}(\Pi^{0}_{\omega}(\Gamma))=\Pi_{C_{\theta^{j}\omega}\parallel S_{\theta^{j}\omega}\oplus U_{\theta^{j}\omega}}(\Pi^{0}_{\omega}(\Gamma))\in  C_{\omega}.
	\end{align*}
	The same calculation for every $n\in\mathbb{Z}$ yields the left side of the claim.
\end{proof}
\begin{remark}\label{identity}
	Note that we also have
	\begin{align*}
		\phi^{n}_{\omega}(\Pi^{0}_{\omega}(\Gamma)+Y_{\omega})=Y_{\theta^{n}\omega}+\sum_{0\leq j\leq n-1}\psi^{n-1-j}_{\theta^{j+1}\omega}\big(P_{\theta^{j}\omega,\rho}(\phi^{j}_{\omega}(\Pi^{0}_{\omega}(\Gamma)+Y_{\omega})-Y_{\theta^j\omega})\big).
	\end{align*}  
\end{remark}
Finally, we can formulate the main result of this section.

\begin{theorem}[Center manifold theorem]\label{LCMT}
	Assume $(\Omega,\mathcal{F},\P,\theta)$ is an ergodic measure-preserving dynamical systems. Let $\varphi$ be a Fr\'echet-differentiable cocycle acting on a measurable field of Banach spaces $\{E_{\omega}\}_{\omega \in \Omega}$ and assume that $\varphi$ admits a stationary point $Y_{\omega}$. Furthermore, assume that the linearized cocycle $\psi$ around $Y$ is compact and satisfies Assumption \ref{ASSU} and the conditions of Theorem \ref{thm:MET_Banach_fields}. Suppose that at least one Lyapunov exponent is zero. For every $0<\nu<\min\lbrace \mu^{+},-\mu^{-}\rbrace$, let
	\begin{align*}
		h^{c}_{\omega} \colon C_\omega\rightarrow \mathcal{M}^{c,\nu}_{\omega}
	\end{align*}
	be defined as $h_{\omega}^{c}(v_{\omega}) = \Pi^{0}_{\omega}[\Gamma]$ where $\Gamma$ is the unique fixed point that satisfies $I_{\omega}(v_{\omega},\Gamma)=\Gamma$, and $I_{\omega}$ is the map defined in Proposition \ref{CENTER}. Let $\phi$ be defined as in Definition \ref{def:local_cocycle} for a random variable $\rho$ provided by Proposition \ref{CENTER}. Then the following statements hold:
	\begin{itemize}
		\item[(i)] $h^{c}_{\omega}$ is a homeomorphism, Lipschitz continuous and differentiable at zero.
		\item[(ii)] $\mathcal{M}^{c,\nu}_{\omega}$ is a topological Banach manifold\footnote{That means a $\mathcal{C}^0$ Banach manifold in the sense of definition \cite[3.1.3 Definition]{AMR88}.} modeled on $C_\omega$. Furthermore, for \( m \geq 1 \), assume that for every \(\omega \in \Omega\), the function \(\varphi^{1}_{\omega}\) is $m$-times Fréchet-differentiable and that the random norm \(\| \cdot \|_{E_\omega}\) is $m$-times Fréchet-differentiable outside of zero, i.e. that
			\begin{align*}
				&\| \cdot \|_{E_\omega} : E_\omega \setminus \{0\} \to \mathbb{R}, \\
				&\xi_{\omega} \mapsto \| \xi_\omega \|_{E_\omega}
			\end{align*}
			is $m$-times Fréchet-differentiable\footnote{Note that this is always true in the case of Hilbert spaces.}. Then this manifold is \( \mathcal{C}^{m-1} \) for every \(\omega \in \Omega\).
		\item[(iii)] $\mathcal{M}^{c,\nu}_{\omega}$ is $\phi$-invariant, i.e. for every $n \in \mathbb{N}_0$, $\phi^{n}_{\omega}(\mathcal{M}^{c,\nu}_{\omega})\subset\mathcal{M}^{c,\nu}_{\theta^n\omega}$.
	\end{itemize}
\end{theorem}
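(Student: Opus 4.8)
The plan is to establish the three items of Theorem \ref{LCMT} in sequence, leaning heavily on Proposition \ref{CENTER} and Lemmas \ref{lemma:I_lipschitz}, \ref{lemma:injecitvity}, \ref{lemma:mane}. For item (i), I would first observe that Lemma \ref{lemma:mane} identifies $\mathcal{M}^{c,\nu}_{\omega}$ with the image of $h^{c}_{\omega}$: given $v_{\omega}\in C_{\omega}$, the fixed point $\Gamma=\Gamma_{v_{\omega}}$ satisfies the invariance relation in Definition \ref{definition_}, so $h^{c}_{\omega}(v_{\omega})=\Pi^{0}_{\omega}[\Gamma_{v_{\omega}}]\in\mathcal{M}^{c,\nu}_{\omega}$; conversely any point of $\mathcal{M}^{c,\nu}_{\omega}$ comes with a $\Gamma$ solving the invariance relation, which by Lemma \ref{lemma:mane} is the fixed point of $I_{\omega}(v_{\omega},\cdot)$ for the $v_{\omega}$ given by \eqref{center_vector}. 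Lipschitz continuity of $h^{c}_{\omega}$ follows from Lemma \ref{lemma:I_lipschitz} composed with the bounded projection $\Pi^{0}_{\omega}$; injectivity and the Lipschitz bound for the inverse follow from the second assertion of Lemma \ref{lemma:injecitvity} (which gives $\|v_{\omega}-\tilde v_{\omega}\|\le 2\|\Pi^0_\omega(\Gamma)-\Pi^0_\omega(\tilde\Gamma)\|$), and surjectivity onto $\mathcal{M}^{c,\nu}_{\omega}$ is exactly the image identification just described; hence $h^{c}_{\omega}$ is a bi-Lipschitz homeomorphism. Differentiability at zero I would obtain by noting $\Gamma_{0}=0$ (since $P_{\omega,\rho}(0)=0$, so $\Gamma\equiv 0$ solves the fixed point equation when $v_\omega=0$), and then estimating $\|\Gamma_{v_\omega}-\psi^\bullet_\omega(v_\omega)\text{-part}\|$ using \eqref{LIPs} together with $h(x)=o(|x|)$ near $0$ (from $\limsup_{x\to0}h(x)/|x|^r<\infty$ with the linear term coming from the $P_{\omega,\rho}$ contributions being controlled by $h(4\rho)$ which vanishes as the argument shrinks); the derivative at $0$ is then the inclusion $C_\omega\hookrightarrow E_\omega$, i.e. $D_0 h^c_\omega = \Pi^0_\omega|_{C_\omega}$ composed appropriately.

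For item (ii), the topological manifold structure modeled on $C_{\omega}$ is immediate once (i) is in hand: $(h^{c}_{\omega})^{-1}$ serves as a single global chart from $\mathcal{M}^{c,\nu}_{\omega}$ onto the open set $C_\omega\cap B(0,R(\omega))$-type domain in the finite-dimensional space $C_{\omega}$, so one atlas with one chart witnesses the $\mathcal{C}^0$-structure in the sense of \cite[3.1.3]{AMR88}. For the higher regularity statement, the idea is to upgrade the contraction argument of Proposition \ref{CENTER} to a parameter-dependent fixed point theorem: regard $I$ as a map on an appropriate weighted sequence space depending $\mathcal{C}^{m-1}$-smoothly on the parameter $v_\omega$, using that $\varphi^1_\omega$ is $m$-times Fréchet differentiable and that the cut-off $\delta(\|\cdot\|/\rho)$ entering $P_{\omega,\rho}$ inherits $m$-fold differentiability from the assumed smoothness of the norm away from $0$ (on the support of $\delta'$ the argument is bounded away from $0$). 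A uniform-contraction / fiber-contraction theorem then yields that $v_\omega\mapsto\Gamma_{v_\omega}$, hence $h^c_\omega$, is $\mathcal{C}^{m-1}$, and pushing the chart forward makes $\mathcal{M}^{c,\nu}_\omega$ a $\mathcal{C}^{m-1}$ manifold.

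For item (iii), $\phi$-invariance, the plan is purely algebraic using Lemma \ref{lemma:mane}. Take $\xi_\omega+Y_\omega\in\mathcal{M}^{c,\nu}_\omega$ with associated section $\Gamma\in\Sigma^\nu_\omega$ satisfying $\Pi^0_\omega[\Gamma]=\xi_\omega$ and the invariance relation $\phi^m_{\theta^n\omega}(\Pi^n_\omega[\Gamma]+Y_{\theta^n\omega})=\Pi^{n+m}_\omega[\Gamma]+Y_{\theta^{n+m}\omega}$ for all $(m,n)\in\mathbb N\times\mathbb Z$. Fix $n_0\in\mathbb N_0$ and set $\Gamma'\in\Sigma^\nu_{\theta^{n_0}\omega}$ by $\Pi^{k}_{\theta^{n_0}\omega}[\Gamma']:=\Pi^{n_0+k}_\omega[\Gamma]$ for $k\in\mathbb Z$ (a shift of the section); one checks $\Gamma'\in\Sigma^\nu_{\theta^{n_0}\omega}$ since the weight $\exp(-\nu|k|)$ only changes by a bounded factor under the shift. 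Then $\Pi^0_{\theta^{n_0}\omega}[\Gamma']=\Pi^{n_0}_\omega[\Gamma]$, and $\phi^{n_0}_\omega(\xi_\omega+Y_\omega)=\phi^{n_0}_\omega(\Pi^0_\omega[\Gamma]+Y_\omega)=\Pi^{n_0}_\omega[\Gamma]+Y_{\theta^{n_0}\omega}=\Pi^0_{\theta^{n_0}\omega}[\Gamma']+Y_{\theta^{n_0}\omega}$ by the invariance relation (with $m=n_0$, $n=0$), while the shifted relation $\phi^m_{\theta^{n}(\theta^{n_0}\omega)}(\Pi^n_{\theta^{n_0}\omega}[\Gamma']+Y_{\theta^{n}\theta^{n_0}\omega})=\Pi^{n+m}_{\theta^{n_0}\omega}[\Gamma']+Y_{\theta^{n+m}\theta^{n_0}\omega}$ holds for all $(m,n)$ because it is just the original relation reindexed. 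Hence $\phi^{n_0}_\omega(\xi_\omega+Y_\omega)\in\mathcal{M}^{c,\nu}_{\theta^{n_0}\omega}$, proving $\phi^{n_0}_\omega(\mathcal{M}^{c,\nu}_\omega)\subset\mathcal{M}^{c,\nu}_{\theta^{n_0}\omega}$.

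I expect the main obstacle to be the $\mathcal{C}^{m-1}$ regularity in item (ii): one must set up the right scale of Banach spaces of sections on which $I$ and its parameter-derivatives act boundedly, verify that the weighted norms are still compatible with the contraction constant $5L_\varepsilon<1$ after differentiating (the derivatives of $P_{\omega,\rho}$ bring in derivatives of $h$ and of the norm, which must be absorbed into the tempered random variables $f$, $\rho$, $F^{\bullet}_\varepsilon$ without destroying the geometric summability in $j$), and then invoke a suitable smooth fixed-point / fiber-contraction theorem. The other two items are comparatively routine given the lemmas already proved.
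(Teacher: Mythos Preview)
Your proposal is correct and follows essentially the same route as the paper: item (i) via Lemmas \ref{lemma:mane}, \ref{lemma:I_lipschitz}, \ref{lemma:injecitvity}; item (ii) via the single global chart $h^{c}_{\omega}$ together with smooth dependence of the fixed point on the parameter (the paper phrases this as ``implicit function theorem'' rather than fiber contraction, but the content is the same); and item (iii) via the shift $\Pi^{k}_{\theta^{n_0}\omega}[\Gamma']:=\Pi^{n_0+k}_{\omega}[\Gamma]$. Two incidental remarks: your argument for (iii) checks Definition \ref{definition_} directly, whereas the paper instead verifies $I_{\theta\omega}(\psi^{1}_{\omega}(v_\omega),\tilde\Gamma)=\tilde\Gamma$ for $n_0=1$ and iterates---your version is a bit more transparent; and your sketch of differentiability at zero in (i) actually goes beyond what the paper spells out.
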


\begin{proof}
	
	\begin{itemize}
		\item[(i)] Lemma \ref{lemma:mane} immediately implies surjectivity of the map $h_{\omega}^c$. Let $v_{\omega}, \tilde{v}_{\omega} \in C_{\omega}$ and assume that $h_{\omega}^c(v_{\omega}) = h_{\omega}^c(\tilde{v}_{\omega})$. By definition, there exist $\Gamma, \tilde{\Gamma} \in \Sigma^{\nu}_{\omega}$ such that $I_{\omega}(v_{\omega},\Gamma) = \Gamma$,  $I_{\omega}(\tilde{v}_{\omega},\tilde{\Gamma)} = \tilde{\Gamma}$ and $\Pi^0_{\omega}[\Gamma] = \Pi^0_{\omega}[\tilde{\Gamma}]$. Lemma \ref{lemma:injecitvity} implies that $v_{\omega} = \tilde{v}_{\omega}$, thus $h_{\omega}^c$ is also injective. Lipschitz continuity of $h_{\omega}^c$ is a consequence of Lemma \ref{lemma:I_lipschitz} and continuity of the inverse follows again from Lemma \ref{lemma:injecitvity}.
		
		\item[(ii)] By the first item, $h^{c}_{\omega}$ defines a a continuous, Lipschitz chart on $\mathcal{M}^{c,\nu}_{\omega}$. Note that we modified the original cocycle $\varphi$ by $\phi$. Let \(\varphi^{1}_{\omega}\) be \(\mathcal{C}^m\) for every \(\omega \in \Omega\) and let the random norms \(\| \cdot \|_{E_\omega}\) be smooth outside of zero for every \(\omega \in \Omega\). From Definition \ref{def:local_cocycle}, it follows that the function \(\phi^1_{\omega}\) is also \(\mathcal{C}^m\) for every \(\omega \in \Omega\) in \(E_\omega \setminus \{ Y_\omega \}\).
			At \(Y_\omega\), since in a neighborhood of zero we have \(\delta \left( \frac{\| \xi_\omega \|}{\rho(\theta \omega)} \right) = 1\), it follows that \(\phi^1_{\omega}\) is also \(\mathcal{C}^m\) at \(Y_\omega\). Recall that we construct the center manifold for the cocycle \(\phi\) using the implicit function theorem for the map \(I\). Therefore, when for every \(\omega \in \Omega\), \(\phi^{1}_{\omega}\) is \(\mathcal{C}^m\), the implicit function theorem implies that the center manifold is \(\mathcal{C}^{m-1}\). 
		\item[(iii)] The $\phi$-invariance property of $\mathcal{M}^{c,\nu}_{\omega}$ follows from Definition \ref{definition_} and Lemma \ref{lemma:mane}. Indeed, assume $\xi_\omega+Y_\omega\in\mathcal{M}^{c,\nu}_{\omega}$, then for $\Gamma\in \Sigma^{\nu}_{\omega}$ and $v_\omega$ defined in \eqref{center_vector}, we have $I_{\omega}(v_\omega,\Gamma)=\Gamma$ and $\Pi^{0}_{\omega}[\Gamma]=\xi_\omega$. Let us define  $\tilde{\Gamma}\in \Sigma^{\nu}_{\theta\omega}$, with $\Pi^{n}_{\theta\omega}[\tilde{\Gamma}]:=\Pi^{n+1}_{\omega}[{\Gamma}]$ and $\tilde{v}_{\theta\omega}\in C_{\theta\omega}$ by $\tilde{v}_{\theta\omega}=\psi^{1}_{\omega}(v_\omega)$. Note that $\Pi^{0}_{\theta\omega}[\tilde{\Gamma}]=\phi^{1}_{\omega}(\xi_\omega+Y_\omega)-Y_{\theta\omega}$ and $I_{\theta\omega}(\tilde{v}_{\theta\omega},\tilde{\Gamma})=\tilde{\Gamma}$. This proves the claim.
		
	\end{itemize}
\end{proof}
	We stated the invariance property of the manifold $\mathcal{M}^{c,\nu}_{\omega}$ for the modified system $\phi$ in the last theorem. In the following corollary, we will reformulate it for the original cocycle $\varphi$.
	\begin{corollary}
		Assume $\mathcal{M}^{c,\nu}_{\omega}$ to be the topological manifolds that come from Theorem \ref{LCMT}. Assume further that $\rho$ is the  random variable  that is provided by Proposition \ref{CENTER}. Then the following invariance holds for $\varphi$:
		\begin{align}\label{RRFR}
			\varphi^{1}_{\omega}\left(\mathcal{M}^{c,\nu}_{\omega} \cap \left\{ Y_\omega + \xi_\omega\, :\,  \Vert\xi_\omega\Vert \leq \rho(\theta\omega) \right\}\right) \subseteq \mathcal{M}^{c,\nu}_{\theta\omega}.
		\end{align}
		Also, more generally, for
		\[ D^{n}_{\omega} = \bigcap_{0 \leq j \leq n-1} \left\{ Y_\omega + \xi_\omega\, :\,  \|\varphi^{j}_{\omega}(Y_\omega + \xi_\omega) - Y_{\theta^{j}\omega}\| \leq \rho(\theta^{j+1}\omega) \right\}, \]
		we have
		\begin{align}\label{GVBVV}
			\varphi^{n}_{\omega} \left( \mathcal{M}^{c,\nu}_{\omega} \cap D^{n}_{\omega} \right) \subseteq \mathcal{M}^{c,\nu}_{\theta^n\omega}.
		\end{align}
	\end{corollary}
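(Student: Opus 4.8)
The plan is to deduce both invariance statements from the $\phi$-invariance of $\mathcal{M}^{c,\nu}_{\omega}$ already established in Theorem \ref{LCMT}(iii), using Remark \ref{remark:local_cocycle} to replace the modified cocycle $\phi$ by the original $\varphi$ on the sets in question. First I would note that \eqref{RRFR} is just the case $n=1$ of \eqref{GVBVV}: since $\varphi^{0}_{\omega} = \operatorname{Id}_{E_{\omega}}$, the set $D^{1}_{\omega}$ equals $\{\, Y_{\omega} + \xi_{\omega}\, :\, \|\xi_{\omega}\| \leq \rho(\theta\omega)\,\}$, so it suffices to prove \eqref{GVBVV}.

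For \eqref{GVBVV}, I would fix $n \geq 1$ and a point $Y_{\omega} + \xi_{\omega} \in \mathcal{M}^{c,\nu}_{\omega} \cap D^{n}_{\omega}$. By the very definition of $D^{n}_{\omega}$ we have $\|\varphi^{j}_{\omega}(Y_{\omega} + \xi_{\omega}) - Y_{\theta^{j}\omega}\| \leq \rho(\theta^{j+1}\omega)$ for all $0 \leq j \leq n-1$, which is precisely the hypothesis of Remark \ref{remark:local_cocycle} with $n-1$ in the role of $n$. Hence $\phi^{i}_{\omega}(Y_{\omega} + \xi_{\omega}) = \varphi^{i}_{\omega}(Y_{\omega} + \xi_{\omega})$ for every $0 \leq i \leq n$, and in particular $\varphi^{n}_{\omega}(Y_{\omega} + \xi_{\omega}) = \phi^{n}_{\omega}(Y_{\omega} + \xi_{\omega})$. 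Since $Y_{\omega} + \xi_{\omega} \in \mathcal{M}^{c,\nu}_{\omega}$, Theorem \ref{LCMT}(iii) gives $\phi^{n}_{\omega}(Y_{\omega} + \xi_{\omega}) \in \mathcal{M}^{c,\nu}_{\theta^{n}\omega}$, and combining this with the previous identity yields $\varphi^{n}_{\omega}(Y_{\omega} + \xi_{\omega}) \in \mathcal{M}^{c,\nu}_{\theta^{n}\omega}$, which is exactly \eqref{GVBVV}.

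I expect no genuine analytic obstacle in this corollary; all the work is contained in Theorem \ref{LCMT} and Remark \ref{remark:local_cocycle}. The only point requiring care is the index bookkeeping: one must verify that the $n$ inequalities defining $D^{n}_{\omega}$ (indexed by $0 \leq j \leq n-1$) are enough to trigger Remark \ref{remark:local_cocycle} so that the coincidence $\phi^{i}_{\omega} = \varphi^{i}_{\omega}$ holds up to and including iteration $i = n$ (and not merely up to $i = n-1$), since it is the $n$-th iterate that appears in \eqref{GVBVV}. Checking this off-by-one, together with the identification $D^{1}_{\omega} = \{\, Y_{\omega} + \xi_{\omega}\, :\, \|\xi_{\omega}\| \leq \rho(\theta\omega)\,\}$ from the first paragraph, completes the argument.
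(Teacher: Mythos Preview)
Your proposal is correct and follows essentially the same approach as the paper: both arguments use Remark \ref{remark:local_cocycle} to identify $\phi$ with $\varphi$ on the relevant set and then invoke the $\phi$-invariance from Theorem \ref{LCMT}(iii). Your presentation is in fact slightly tidier, since you observe that \eqref{RRFR} is the $n=1$ case of \eqref{GVBVV} and handle the index bookkeeping explicitly, whereas the paper treats the two statements separately.
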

	\begin{proof}
		Recall from Remark \ref{remark:local_cocycle} that $\phi$ and our original cocycle $\varphi$ coincide locally around the stationary point. More precisely, from Definition \ref{def:local_cocycle}, if $\| \xi_\omega \| \leq \rho(\theta \omega)$, then $\phi^1_{\omega}(Y_{\omega} + \xi_\omega) = \varphi^1_{\omega}(Y_{\omega} + \xi_\omega)$. In particular, for every $z_\omega \in \mathcal{M}^{c,\nu}_{\omega} \cap \{ Y_\omega + \xi_\omega\, :\,  \| \xi_\omega \| \leq \rho(\theta \omega) \}$, we have that $\phi^1_{\omega}(z_\omega) = \varphi^1_{\omega}(z_\omega)$. From Theorem \ref{LCMT}, item (iii), $\mathcal{M}^{c,\nu}_{\omega}$ is $\phi$-invariant. Therefore, we can conclude \eqref{RRFR}. To prove \eqref{GVBVV}, we can argue similarly and use Remark \ref{remark:local_cocycle}.
	\end{proof}
	\begin{remark}
		Let us motivate the definition of the map $I_{\omega}$ defined in the proof of Theorem \ref{LCMT} a bit more. In fact, the main idea for proving existence of the center manifold in our setting is very similar to the standard method involving the Lyapunov-Perron map. This approach is frequently used in the literature to demonstrate the existence of invariant manifolds in both deterministic and stochastic contexts. Recall that \(\varphi^{n}_{\omega}(Y_{\omega})=Y_{\theta^n\omega}\) and \(D_{Y_{\theta^{j}\omega}}\varphi^{n-j} = \psi^{n-j}_{\theta^{j}\omega}\), so that for every \(n \geq 1\),
		\begin{align*}
			\varphi^{n}_{\omega}(Y_{\omega}+\xi_\omega)-\varphi^{n}_{\omega}(Y_\omega)=\psi^{n}_{\omega}(\xi_\omega)+\sum_{1\leq j\leq n}\psi^{n-j}_{\theta^{j}\omega}\left(P_{\theta^{j-1}\omega}(\underbrace{\varphi^{j-1}_{\omega}(Y_{\omega}+\xi_\omega)-\varphi^{j-1}_{\omega}(Y_\omega)}_{w^{j-1}_{\omega}})\right).
		\end{align*}
		Remember that
		\begin{align*}
			w^{j}_{\omega}=\Pi_{U_{\theta^{j}\omega}\parallel C_{\theta^{j}\omega}\oplus S_{\theta^{j}\omega}}(w^{j}_{\omega})+\Pi_{C_{\theta^{j}\omega}\parallel S_{\theta^{j}\omega}\oplus U_{\theta^{j}\omega}}(w^{j}_{\omega})+\Pi_{S_{\theta^{j}\omega}\parallel U_{\theta^{j}\omega}\oplus C_{\theta^{j}\omega}}(w^{j}_{\omega}).
		\end{align*} 
		We keep the center part and assume that our map is also defined on the negative iterations and that the following relations hold for the stable and unstable parts: 
		\begin{align}\label{FCVV}
			\sum_{j\geq 1}\big{[}\psi^{n-j}_{\theta^{j}\omega}\circ\Pi_{U_{\theta^{j}\omega}\parallel C_{\theta^{j}\omega}\oplus S_{\theta^{j}\omega}}\big{]}  P_{\theta^{j-1}\omega}\big{(}w^{j-1}_{\omega}\big{)}=\sum_{j\leq 0}[\psi^{n-j}_{\theta^{j}\omega}\circ\Pi_{S_{\theta^{j}\omega}\parallel U_{\theta^{j}\omega}\oplus C_{\theta^{j}\omega} }]P_{\theta^{j-1}\omega}(w^{j-1}_{\omega}).
		\end{align}
		This assumption is reasonable. Indeed, first note that on the left side of \eqref{FCVV}, for \(\psi^{n-j}_{\theta^j\omega}\), the term \(n-j\) is negative for \(j > n\). Therefore, since we compose this function with \(\Pi_{U_{\theta^{j}\omega} \parallel C_{\theta^{j}\omega} \oplus S_{\theta^{j}\omega}}\), we can expect, from item (iii) in Theorem \ref{thm:MET_Banach_fields}, that the infinite sum is convergent. Similarly, we can argue for the right part. The identity assumption aligns with the idea that, in the center manifold, we expect the unstable and stable parts to neutralize each other, with only the center part coming into play. These explanations motivate the definitions of \(\Sigma_{\omega}^{\nu}\) and \(I_{\omega}\). As stated earlier, the main idea is well-established through what is known as the \emph{Lyapunov-Perron method}.
	\end{remark}
\section{Application}\label{app}
To emphasize the generality of our main result, in this part, we illustrate it with several examples. Since our examples are relatively independent, we only briefly go into the details and refer to the original papers for more background.
\subsection{Application to stochastic singular delay equations }\label{A_1}
We first show how our result can applied to a large family of stochastic delay equations (SDDEs). In its simplest formulation, an SDDE takes the form 
\begin{align}\label{eqn:sdde_intro}
	\begin{split}
		dY_t &= L(Y_t,Y_{t-r})\, \mathrm{d}t + G(Y_t,Y_{t-r})\, \mathrm{d}B_t(\omega),\\
		Y_t &= \xi_t; \quad t \in [-r,0].
	\end{split}
\end{align} 
Here $r > 0$ and $B=(B^{1},B^{2},...,B^{d})$, such that the components of $B$ are independent Brownian motions. Also, $L:\mathbb{R}^m \times \mathbb{R}^m\rightarrow \mathbb{R}^m$ is a linear drift and $G:\mathbb{R}^m\times \mathbb{R}^m\rightarrow \mathcal{L}(\mathbb{R}^d,\mathbb{R}^m)$ is either linear or is a bounded differentiable function up to four times with bounded derivatives. The main obstacle to developing a dynamical theory for this type of equations is that while we can solve them, the solutions necessarily do not depend continuously on the initial values. Therefore, we can not expect to define a classical semi-flow on a reasonable Banach space, cf.\cite{Mao08, Moh84} and \cite [Theorem 0.2.]{GVRS22}. However, to sort out this problem in the recent papers \cite{GVRS22, GVR21}, the authors proved it is possible to find a measurable field of Banach spaces $(\{E_{\omega}\}_{\omega \in \Omega},\Delta)$ and an ergodic measure-preserving dynamical system  $(\Omega,\mathcal{F},\P,\theta)$, such that the family of solutions to equation \ref{eqn:sdde_intro}, defines a continuous cocycle on $\{E_{\omega}\}_{\omega \in \Omega}$. To be more precise (while losing some minor formalities), for $\frac{1}{3}<\beta\leq \frac{1}{2}$ and every $\omega\in\Omega$ if we set  $E_{\omega}=\mathscr{D}_{B(\omega)}^{\beta}([-r,0])$, then this family constitutes a measurable fields of Banach spaces, cf.\cite[Theorem 1.1]{GVRST22}. Here by $\mathscr{D}_{B(\omega)}^{\beta}([-r,0])$, we mean the space of \emph{controlled rough paths}, cf.\cite [Definition 4.6]{FH20}. It is proven in \cite[Section 2]{GVR21} that the solutions to equation \ref{eqn:sdde_intro}, act on $\lbrace \mathscr{D}_{B(\omega)}^{\beta}([-r,0])\rbrace_{\omega \in\Omega}$. Let $G(0,0)=0$, then $Y=0$ is a stationary solution to \eqref{eqn:sdde_intro}. From \cite [Lemma 3.2.]{GVR21}, the linearized cocycle around the stationary point defines a compact linear cocycle on $\lbrace E_\omega\rbrace_{\omega\in\Omega}$ and the semi-invertible Multiplicative Ergodic Theorem holds. Assume further that one of these Lyapunov exponents is equal to zero. This, for example, holds when $ \frac{\partial G}{\partial x}(0,0)=\frac{\partial G}{\partial y}(0,0)=0$ and the spectrum of $L$ contains at least one element on the unit sphere. This can be shown by proving slightly generalized versions of \cite[Proposition 2.9 and Proposition 3.7]{GVR21} that can incorporate a linear drift. For $\xi\in E_\omega$, let $(Y^{\xi}_t)_{t\geq 0}$ be our unique global solution and for $n\in\mathbb{N}$ set $\varphi^{n}_{\omega}(\xi)=(Y^{\xi}_t)_{(n-1)r\leq t\leq nr}\in E_{\theta^n\omega}$. Then from \cite[Page 17]{GVR21}, Assumption \ref{ASSU} holds. Therefore, we can apply Theorem \ref{LCMT} to deduce the existence of center manifolds for this family of equations.

\subsection{Application to rough differential equations}\label{A_2}
For $\frac{1}{4}<\gamma\leq \frac{1}{2}$, we consider the following rough equation
\begin{align}\label{SDE}
	\mathrm{d}Z_{t} = V(Z_t)\, \mathrm{d} \mathbf{X}_t(\omega)+V_{0}(Z_t)\, \mathrm{d}t, \ \ \  Z_{0}=z_0\in \mathbb{R}^m.
\end{align}
Assume that $(\Omega,\mathcal{F},\P,(\theta_t)_{t \in \R})$ is an ergodic measure-preserving dynamical system
and that the process $\mathbf{X}:\mathbb{R}\times\Omega\rightarrow T^{3}(\mathbb{R}^d)$ is a $\frac{1}{\gamma}$-variation \emph{geometric rough path cocycle}, cf. \cite [Definition 2]{RS17}. Typical examples of such processes are fractional Brownian motions.
For some background about this equation and the setting of solution, we refer to \cite{FH20}, \cite{RS17}, and \cite{GVR23A}. The first paper that dealt with the center manifold for this type of equation is \cite{NK21}. By imposing standard assumptions on $V$ and $V_0$, cf. \cite[Assumption 1.1., Example 1.2.]{GVR23A}, the equation \eqref{SDE} for every initial value admits a unique and global solution. Let us fix an arbitrary time step $t_0>0$ and for $n\in\mathbb{N}$, define $\tilde{\theta}^{n}:=\theta_{nt_0}$. Assume $(\phi(t,\mathbf{X}(\omega),z_0))_{t\geq 0}$, is the solution to equation \ref{SDE}. We set $\varphi^{n}_{\omega}(z_0):=\phi(nt_0,\mathbf{X}(\omega),z_0)$. From our assumptions on $\mathbf{X}(\omega)$,
\begin{align*}
	n,m\in\mathbb{N}:\ \  \varphi^{m+n}_{\omega}(z_0)=\varphi^{n}_{\tilde{\theta}^{m}\omega}\big(\varphi^{m}_{\omega}(z_0)\big).
\end{align*}
In \cite{GVR23A}, this family of equations is studied in detail. Also, several examples in which Assumption \ref{ASSU} holds are given. The simplest case is similar to the previous one when $V(0)=D_{0}V=0$ and $D_{0}V_{0}$ has at least one eigenvalue on the unit sphere. We can then apply Theorem \ref{LCMT} to deduce the existence of center manifolds for $\varphi$.
\subsection{Application to stochastic partial differential equations with multiplicative rough noise}\label{A_3}
Our next example is the following equation,
\begin{align}\label{SPDE_EQU}
	\mathrm{d}Z_t = AZ_t \, \mathrm{d}t+F(Z_t) \, \mathrm{d}t+G(Z_t)\circ\mathrm{d}\mathbf{X}_t, \ \ \ \  Z_{0}=z_0\in \mathcal{B}_0=\mathcal{B},
\end{align} 
where $\mathcal{B}$ is a separable Banach space. We again assume that $(\Omega,\mathcal{F},\P,(\theta_t)_{t \in \R})$ is an ergodic measure-preserving dynamical system and for $\frac{1}{3}<\gamma\leq \frac{1}{2}$, the process $\mathbf{X}:\mathbb{R}\times\Omega\rightarrow T^{3}(\mathbb{R}^d)$ is a $\frac{1}{\gamma}$-variation \emph{geometric rough path cocycle}. We assume further that we have a \emph{monotone
	family of interpolation spaces} $\lbrace (\mathcal{B}_{\beta},\vert\ \cdot \vert_{\beta})\rbrace_{\beta\in\mathbb{R}}$, cf.\cite[Definition 2.1]{GHT21}. This family of equations are studied first in \cite{GH19}. For more details about this equation, we refer to \cite{GVR23B}. As before we use $(\phi(t,\mathbf{X}(\omega),z_0))_{t\geq 0}$, to denote the solution. We fix an arbitrary time step $t_0>0$ also for every $n\in\mathbb{N}$, define $\tilde{\theta}^{n}:=\theta_{nt_0}$ and $\varphi^{n}_{\omega}(z_0):=\phi(nt_0,\mathbf{X}(\omega),z_0)$. In \cite{GVR23B}, the authors proved that by imposing some natural conditions on $\mathbf{X}(\omega)$ (cf.\cite[Assumption 2.8.]{GVR23B}), Assumption \ref{ASSU} holds, cf.\cite [Theorem 3.4, Page 36]{GVR23B}. We can then deduce the existence of center manifolds around the stationary point for $\varphi$. Similar to the previous examples, a simple case is when zero belongs to the spectrum of $A$ and $F(0)=D_{0}F=G(0)=D_{0}G=0$.

\subsection{Application to stochastic partial differential equations with fractional noise}\label{A_4}
For $H>\frac{1}{2}$, we now consider the following equation
\begin{align}\label{SPDE_EQU_1}
	\mathrm{d}Z_t = AZ_t \, \mathrm{d}t+F(Z_t) \, \mathrm{d}t+G(Z_t)\circ\mathrm{d}B^{H}_t, \ \ \ \  Z_{0}=z_0\in \mathcal{B}_0=\mathcal{B}.
\end{align} 
Assume $\mathcal{B}$ is a separable Hilbert space and $\lbrace e_i\rbrace_{i\geq 1}$ a complete orthonormal basis for it. Furthermore, $\lbrace \mathcal{\beta}^{H}_{i}\rbrace_{i\geq 1}$ is a sequence of stochastically independent one-dimensional fractional Brownian motions and  $\lbrace\lambda_i\rbrace_{i\geq 1}$ a sequence of non-negative numbers such that $\sum_{i\geq 1}{\lambda_i}<\infty$. Also,
\begin{align*}
	t\in\mathbb{R}: \ \  B^{H}_t=\sum_{i\geq 1}\sqrt{\lambda_i} \mathcal{\beta}^{H}_{i}(t)e_i.
\end{align*}
	This kind of problem was first studied in \cite{GKS10} under the stronger assumption \(\sum_{i \geq 1} \sqrt{\lambda_i} < \infty\) to address the existence of the unstable manifold. More recently, the existence of the stable manifold has been addressed in \cite{LNZ23}. We refer to these papers for a solid background on this equation.
	Since \(H > \frac{1}{2}\), the stochastic integral can be defined path-wise using Young's theory (cf.\ \cite[Section 8.3]{FH20}). Therefore, in contrast to \eqref{SPDE_EQU}, we do not need the iterated integrals (i.e., \(\int \beta^{H}_{i} \, \mathrm{d} \beta^{H}_{j}\)), and the stochastic integral is defined by
	\[
	\int_{0}^{t} S(t-s) G(Z_s) \circ \mathrm{d} B^{H}_\tau = \sum_{i \geq 1} \sqrt{\lambda_i} \int_{0}^{t} S(t-s) G(Z_s) e_i \, \mathrm{d} \mathcal{\beta}^{H}_{i}(\tau).
	\]
	For this example, we strongly believe that by adapting the techniques developed in \cite{GVR23B}, we can verify Assumption \ref{ASSU}. Alternatively, using fractional calculus as in \cite{LNZ23}, this assumption also seems to be verifiable. 
	We also believe that the techniques developed in \cite{GVR23B} can be used to establish the existence of stable and unstable manifolds, thus generalizing the results of \cite{GKS10} and \cite{LNZ23} without relying on fractional calculus.
	\begin{remark}
		The main strategy to verify Assumption \ref{ASSU} usually involves the following steps:
		\begin{enumerate}
			\item \textbf{Obtain an integrable a priori bound.} First, obtain an a priori bound for the solution in terms of the noise and initial value. It is important that this a priori bound is integrable with respect to the noise.
			\item \textbf{Linearize  the equations.} Next, linearize the equations and use the a priori bound to estimate the linearized equations. This step ensures that the conditions of the Multiplicative Ergodic Theorem (MET) are satisfied.
			\item \textbf{Estimate the differences.} Finally, estimate the difference between solutions to the linearized equations with two different initial values to verify \eqref{eqn:diff_bound_P}.
		\end{enumerate}
		We refer to \cite{GVR21}, \cite{GVR23A}, and \cite{GVR23B} for practical examples of how these steps are implemented.
	\end{remark}



\bibliographystyle{alpha}
\bibliography{refs}

\end{document}